\documentclass[12pt]{extarticle}
\usepackage{amsmath, amsthm, amssymb, hyperref, color}
\usepackage{graphicx}
\usepackage{caption}
\usepackage{subcaption}
\usepackage{mathtools}
\usepackage{enumerate}
\usepackage[all]{xypic}
\usepackage{verbatim}
\usepackage{tikz}
\usepackage[lined,commentsnumbered,ruled]{algorithm2e}
\usepackage{caption}
\usepackage{subcaption}
\tolerance 10000
\headheight 0in
\headsep 0in
\evensidemargin 0in
\oddsidemargin \evensidemargin
\textwidth 6.5in
\topmargin .25in
\textheight 8.8in
\synctex=1
\usepackage{makecell}
\usepackage{array}
\usepackage{todonotes}
\usepackage{wasysym}

\newtheorem{theorem}{Theorem}
\newtheorem{proposition}[theorem]{Proposition}
\newtheorem{lemma}[theorem]{Lemma}
\newtheorem{corollary}[theorem]{Corollary}
\newtheorem{conjecture}[theorem]{Conjecture}
\theoremstyle{definition}

\newtheorem{problem}[theorem]{Problem}
\newtheorem{remark}[theorem]{Remark}

\newtheorem{example}[theorem]{Example}


\newcommand{\PP}{\mathbb{P}}
\newcommand{\WP}{\mathbb{WP}}
\newcommand{\RR}{\mathbb{R}}

\newcommand{\CC}{\mathbb{C} }

\title{\textbf{Recovery from Power Sums}}

\author{Hana Mel\'anov\'a\footnote{Supported by START-Prize Y-966 of the Austrian Science Fund}\;, Bernd Sturmfels and Rosa Winter}
\date{}
\begin{document}

\maketitle
\begin{abstract}
\noindent
We study the problem of recovering a collection of $n$ numbers from
the evaluation of $m$ power sums. This yields a system of polynomial equations,
which can be underconstrained ($m < n$), square ($m=n$), 
or overconstrained $(m>n)$.  
Fibers and images of power sum maps are explored in all three regimes,
and  in settings that range from complex and projective to real and positive.
This involves surprising deviations from the B\'ezout bound, 
and the recovery of vectors from length measurements by
 $p$-norms.
\end{abstract}

\section{Introduction}
This article offers a case study in solving systems of polynomial equations. Our model setting reflects applications of nonlinear algebra in engineering,
notably in signal processing \cite{Tsa},
 sparse recovery \cite{JLM}, and low rank recovery \cite{KKK}.
Suppose there is a secret list of complex numbers $z_1,z_2,\ldots,z_n$. Our task is to find~them.
Measurements are made by evaluating the $m$ powers sums
$  c_j = \sum_{i=1}^n z_i^{a_j}  $,
where $\mathcal{A} = \{a_1,a_2,\ldots,a_m\}$~is
a set of $m$ distinct positive integers.
Our aim is to recover the multiset $z = \{z_1,\ldots,z_n\}$ from
the vector $c=(c_1,\ldots,c_m)$.

To model this problem, for any given  pair $(n,\mathcal{A})$, we consider the polynomial map
\begin{equation}
\label{eq:map}
\phi_{\mathcal{A,\CC}} \,:\, \CC^n \rightarrow \CC^m ,
\quad \hbox{where}\,\,\,
\phi_j \,= \,x_1^{a_j} + x_2^{a_j} + \cdots +  x_n^{a_j}  \qquad
\hbox{for} \,\, j=1,2,\ldots,m.
\end{equation}
We are interested in the image and the fibers of the map 
$\phi_{\mathcal{A},\CC}$. The study of these complex algebraic varieties addresses
the following questions:  Is recovery possible? Is recovery unique? 
This problem is  especially interesting
when $z_1,z_2,\ldots,z_n$ are real, or even positive.
Hence, we also study the maps
$\phi_{\mathcal{A},\RR}$ and $\phi_{\mathcal{A},\geq 0}$
that are obtained by restricting $\phi_{\mathcal{A},\CC}$
to $\RR^n$ and $\RR_{\geq 0}^n$, respectively.
For any of these, we study the following system of $m$ equations
in $n$~unknowns:
\begin{equation}
\label{eq:system}
\phi_{\mathcal{A},\bullet}(x) \,\,= \,\, c .
\end{equation}

There are  three different regimes.
If $m > n$ then (\ref{eq:system}) is overconstrained and has~no solution, unless $c = \phi_{\mathcal{A},\CC}(z)$ for some $z\in\CC^n$, and we anticipate
unique recovery of  $\{z_1,\ldots,z_n\}$.
If $m=n$ then (\ref{eq:system}) is expected to have finitely many solutions,
at most the B\'ezout number $a_1 a_2 \cdots a_n$.
If $m < n$ then
the solutions to  (\ref{eq:system})  form a variety of expected dimension $n-m$.

\begin{example}[$n=3$]
\label{ex:regimes} We illustrate the three regimes. Consider the multiset $z = \{6,8,13\}$. We first allow $m=4$ measurements, with $\mathcal{A} = \{2,5,7,8\}$. Then the system  (\ref{eq:system}) equals
\begin{equation}
\label{eq:system4} \begin{matrix}
x_1^2+x_2^2+x_3^2 &=& 269 ,& &\phantom{oooooo}&  x_1^5+x_2^5+x_3^5 &=& 411837,  \smallskip \\
 x_1^7+x_2^7+x_3^7&=&65125605, &  &\phantom{oooooo} &  x_1^8+x_2^8+x_3^8&=&834187553 .
 \end{matrix}
\end{equation}
For the lexicographic term order with $x_1 > x_2 > x_3$, we compute the reduced Gr\"obner basis
$$ \bigl\{\,
x_1+x_2+x_3-27\,,\,\, x_2^2+x_2x_3+x_3^2-27 (x_2+x_3)+230\,,\, \, (x_3-6)(x_3-8)(x_3-13) \,\bigr\}.$$
This is a $0$-dimensional radical ideal, having six zeros, so
$z = \{6,8,13\}$ is recovered uniquely.

We next take $m=3$ with $\mathcal{A} = \{2,5,7\}$. Here, we 
solve the first three equations in (\ref{eq:system4}).
This square system has $66$ complex solutions,
four less than the B\'ezout number $70 = 2 \times 5 \times 7$.
Finally, we allow only $m=2$ measurements, with $\mathcal{A} = \{2,5\}$.
The first two equations in~(\ref{eq:system4}) define a curve
of degree $10 = 2 \times 5$ in $\CC^3$. Its closure in $\PP^3$ is
a singular curve of genus~$14$.
\end{example}

\begin{remark}
In applications, noise in the data is a  concern. This makes our problem
interesting even for $\mathcal{A} = \{1,2,\ldots,m\}$.
The recent article \cite{Tsa} studies
reliable recovery from noisy power sums in that  case.
However,
from the perspectives of algebraic geometry and exact computations,
the dense case  is not interesting. The power sums reveal the
 elementary symmetric functions,
via Newton's identities,
and hence, our recovery problem amounts to finding the roots of a polynomial of degree $n$ in one variable. For related work see \cite{BCW21}. 
\end{remark}

In this paper,  $\mathcal{A}$ is any set of $m$ distinct positive integers.
Our presentation is organized as follows. In Section \ref{sec:fibers} we show that, for $m\leq n$, the fiber of $\phi_{\mathcal{A},\CC}$ above a generic point in $\CC^m$ has 
the expected dimension $n-m$.  For $m > n$ we expect
the recovery of complex multisets from power sums to be unique when $\gcd(a_1,\ldots,a_m)=1$.
This is stated in Conjecture \ref{conj.:injectivity over CC}.
In Section \ref{sec: square systems}, we study the case $m=n$.
We propose a formula for the number of solutions of (\ref{eq:system}).
This number is generally less than the B\'ezout number $a_1 a_2 \cdots a_n$.
For instance, in Example \ref{ex:regimes}, the drop is from $70$ to $66$.
We shall explain this.
This issue is closely related to the question, put forth in
\cite{CKW}, for which sets $\mathcal{A}$ the
power sums form a regular sequence.
We present supporting evidence for the
conjectures made in \cite{CKW} and we offer generalizations.

In Section \ref{sec3} we turn to the images of the power sum maps
$\phi_{\mathcal{A},\CC}, \phi_{\mathcal{A},\RR} $ and
$\phi_{\mathcal{A},\geq 0}$.
The image of  $\phi_{\mathcal{A},\CC}$
is constructible and has the expected dimension ${\rm min}(m,n)$,
 but it is generally not closed in $\CC^m$.
In the overconstrained case
$(m>n)$, we study the degree and equations of the closure of the image.
For instance,  the image of $\phi_{\mathcal{A},\CC} : \CC^3 \rightarrow \CC^4$
in Example \ref{ex:regimes}  is 
defined by a polynomial of degree $45$ with $304$ terms.
The image of the real map $\phi_{\mathcal{A},\RR} $ is semialgebraic in $\RR^m$.
It is closed if some $a_i$ is even. Moreover, the orthant
$\RR^n_{\geq 0}$ is mapped to a closed subset of $\RR^m_{\geq 0}$.
It is a challenging task is to find a semi-algebraic description of
the image. We take first steps by exploring its algebraic boundary. Delineating the real image involves
the ramification locus in $\CC^n$ and its image in $\CC^m$,
which is the branch locus of $\phi_{\mathcal{A},\CC}$.

In Section \ref{sec: recovery from norms}, we examine our problem over
  the positive real numbers. Here, the recovery from power sums is equivalent to recovery from 
length  measurements by various $p$-norms. This enables a better understanding of the map $\phi_{\mathcal{A},\geq0}$. We prove that recovery is unique in the square case $n=m$, see Proposition \ref{prop: unique recovery of norms}. The image of $\phi_{\mathcal{A},\geq0}$
is expressed as a compact subset in the  probability simplex $\Delta_{m-1}$.
Theorem \ref{thm:waves} characterizes the structure of this set.

\section{Fibers}\label{sec:fibers}

Consider the map $\phi = \phi_{\mathcal{A},\CC}$
from $\CC^n$ to $\CC^m$ whose coordinates are
$\phi_j = \sum_{i=1}^n x_i^{a_j}$.
In this section we examine the fibers of $\phi$
and we  show that they have the expected dimension.
We conclude with a discussion concerning the uniqueness of recovery in the case $m=n+1$. 

Given a point $c = (c_1,\ldots,c_m)$ in $\CC^m$, the defining ideal of the fiber 
$\phi^{-1}(c)$ equals
  $$ I_c \,\, =\,\,
\langle \,\phi_1(x) - c_1 ,\ldots, \phi_m(x)-c_m\, \rangle \,\, \subset \,\,
\CC[x_1,\ldots,x_n]. $$
Our recovery problem amounts to computing the variety 
$V(I_c) = \phi^{-1}(c)$ defined by $I_c$ 
in $\CC^n$.

\begin{proposition}\label{lem: complete intersection}
Assume $m \leq n$. Then the following hold: 
\begin{itemize}
\item[](i) The map $\phi$ is dominant, i.e., the image of $\phi$ is dense in $\CC^m$.
    \item[](ii) For generic $c$, the ideal $I_c$ is radical,
 and its variety $V(I_c)$
 has dimension $n-m$.
\end{itemize}
\end{proposition}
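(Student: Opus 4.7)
The plan is to establish (i) by a direct Jacobian calculation, and then to derive (ii) from (i) by invoking standard results on dominant morphisms in characteristic zero. Both parts reduce to the single nontrivial fact that a generalized Vandermonde determinant does not vanish identically.

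For (i), I compute the Jacobian
\[
J_\phi(x) \,=\, \bigl( a_j \, x_i^{a_j-1} \bigr)_{j=1,\ldots,m;\, i=1,\ldots,n}
\,=\, \operatorname{diag}(a_1,\ldots,a_m) \cdot V(x),
\]
where $V(x) = (x_i^{a_j-1})$ is an $m \times n$ matrix. Since each $a_j \geq 1$, it suffices to produce one $m \times m$ minor of $V(x)$ that is not identically zero. I take the minor on the first $m$ columns: it is the alternant $\det(x_i^{a_j-1})_{1 \leq i,j \leq m}$ attached to the distinct nonnegative integers $a_1-1, \ldots, a_m-1$. By the classical identity expressing such an alternant as a Schur polynomial times the standard Vandermonde $\prod_{i<j}(x_j - x_i)$, this determinant is a nonzero element of $\CC[x_1,\ldots,x_m]$. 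Hence $J_\phi$ has rank $m$ on a Zariski-dense open subset of $\CC^n$, and $\phi$ is dominant.

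For (ii), I apply the generic smoothness theorem: since $\phi$ is a dominant morphism between smooth complex varieties, there is a Zariski-dense open $U \subseteq \CC^m$ such that the scheme-theoretic fiber $\phi^{-1}(c) = \operatorname{Spec}(\CC[x_1,\ldots,x_n]/I_c)$ is smooth of pure dimension $n-m$ for every $c \in U$. Smoothness implies reducedness, so $\CC[x_1,\ldots,x_n]/I_c$ is reduced, which is to say that $I_c$ is radical. Its variety $V(I_c)$ therefore has the required dimension $n-m$.

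The only step requiring genuine effort is the nonvanishing of the alternant $\det(x_i^{a_j-1})$. If one prefers to avoid invoking Schur polynomials, an alternative is induction on $m$ via cofactor expansion, or a specialization such as $x_i = \xi^i$ in a single variable $\xi$, where one identifies the unique leading monomial of the resulting univariate polynomial. I expect this to be essentially the entire content of the proof; the remaining statements are applications of off-the-shelf algebraic geometry.
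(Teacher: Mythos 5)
Your proposal is correct, and the central computation---nonvanishing of the alternant $\det(x_i^{a_j-1})$ via the factorization into a Schur polynomial times the Vandermonde---is exactly the engine of the paper's proof as well. The routes around that core differ modestly. For dominance, the paper passes through two explicit intermediate steps (rank of the Jacobian over $\CC(x)$ implies algebraic independence of $\phi_1,\ldots,\phi_m$ by a Lefschetz citation, which implies injectivity of the pullback $\phi^*$, which implies dominance), whereas you go straight from generic full rank of $J_\phi$ to dominance; both are standard. The more substantive difference is in part (ii): the paper cites two separate results---generic reducedness of fibers (Stacks, Lemma 054Z) to get radicality of $I_c$, and a fiber-dimension theorem (Milne, Theorem 9.9(b)) to get $\dim V(I_c)=n-m$---while you obtain both at once by invoking generic smoothness in characteristic zero: on a dense open $U\subseteq\CC^m$ the map is smooth, so fibers over $U$ are smooth of relative dimension $n-m$, and smooth over a field implies reduced. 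This consolidation is a clean simplification and is fully valid here since both $\CC^n$ and $\CC^m$ are smooth and irreducible. One small point worth making explicit if you write this up: generic smoothness (say, Hartshorne III.10.7) requires characteristic zero, which is of course the setting, but the paper's two-citation route is characteristic-free in spirit (generic reducedness over a perfect field), so your argument is slightly less portable though perfectly adequate for the statement at hand.
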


\begin{proof}
The fiber of $\phi$ above a point $c $ is the
variety $V(I_c) \subset \CC^n$. By  \cite[\href{https://stacks.math.columbia.edu/tag/054Z}{Lemma 054Z}]{stacks-project}, the fibers of $\phi$ are generically reduced. This implies that $I_c$ is radical for all points $c$ outside a proper closed subset of $\CC^m$. 
The Jacobian of the map $\phi$ is the  $m \times n$ matrix
\begin{equation}
\label{eq:jacobian} \mathcal{J} \,\,\, = \,\,\,\, \begin{pmatrix}
\displaystyle \frac{\partial \phi_j}{\partial x_i}
\end{pmatrix}_{\substack{1\leq i \leq n\\1\leq j\leq m}} \,\, = \,\,\,\,
\begin{pmatrix}
\displaystyle 
a_{j} x_i^{a_{j}-1}
\end{pmatrix}_{\substack{1\leq i \leq n\\1\leq j\leq m}}. 
\end{equation}
Up to multiplication by a positive integer, each $m \times m$ minor of this matrix is the product of a Vandermonde determinant
and a Schur polynomial; see (\ref{eq:maximal minors}) below.
In particular, none of these
minors of $\mathcal{J}$ is identically zero. Thus, the Jacobian matrix $\mathcal{J}$ has rank~$m$ over
the field $\CC(x_1,\ldots,x_m)$. By \cite[I.11.4]{Le53}, this implies that
the polynomials $\phi_1,\ldots,\phi_m$ are algebraically independent over $\CC$.
From this we conclude that the associated ring homomorphism $$\,\phi^* \,\colon \,\CC[y_1,\ldots,y_m]\rightarrow\CC[x_1,\ldots,x_n], \,\,
y_i \mapsto \phi(x)\,$$ is injective. Hence, our map $\phi$ is dominant, by \cite[Lemma 0CC1]{stacks-project}.
The statement in (i) that the image is dense refers  either to the
Zariski topology or to the classical topology. Both have the same closure in this
situation, by \cite[Corollary 4.20]{MS}.
   
    It now follows from \cite[Theorem 9.9 (b)]{milneAG} that, for all points $c$ outside a
   proper Zariski closed subset of $\CC^m$, the fiber $\phi^{-1}(c)$ has dimension $n-m$. This finishes the proof. 
\end{proof}

The condition that the point $c$ is generic is crucial in Proposition \ref{lem: complete intersection}. The following example shows that the fiber dimension can jump up for special
points $c \in \CC^m$.

\begin{example}[$n=3$] Let $m=3$ and  $\mathcal{A}=\{3,5,7\}$. The generic fiber of the map
$\phi$ consists of $60$ points in $\CC^3$. Interestingly, that number would increase to $66$
if $3$ were replaced with $2$ in $\mathcal{A}$, by Example~\ref{ex:regimes}.
Now, consider the fiber over $c=(0,0,0)$. We examine the homogeneous
ideal $I_{0} = \langle x_1^a + x_2^a + x_3^a \,:\, a \in \mathcal{A} \rangle$.
This  defines three lines of multiplicity three, with an embedded point at the origin. The radical
of this ideal equals $\,
\langle x_1+x_2, x_3 \rangle \cap 
\langle x_1+x_3, x_2 \rangle \cap 
\langle x_2+x_3, x_1 \rangle $.
\end{example}

Let us assume $m>n$, so we are in the overconstrained case.
The following statement is derived from the $m=n$ case in Proposition \ref{lem: complete intersection}, namely by adding additional constraints:

\begin{corollary}
For $m>n$, the fiber of $\phi$ above a generic point in $\CC^m$ is empty. The closure of the image of $\phi$ is an irreducible variety of dimension $n$ in $\CC^m$. The same holds over $\RR$.
\end{corollary}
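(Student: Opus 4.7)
The plan is to reduce to the square case handled by Proposition \ref{lem: complete intersection} and then observe that the image cannot fill $\CC^m$. I would pick any $n$-element subset $\mathcal{A}' \subset \mathcal{A}$, say $\mathcal{A}' = \{a_1,\ldots,a_n\}$, and let $\pi : \CC^m \to \CC^n$ denote projection onto the corresponding coordinates. Then $\pi \circ \phi_{\mathcal{A},\CC} = \phi_{\mathcal{A}',\CC}$, and the latter is dominant by Proposition \ref{lem: complete intersection}(i) applied to $\mathcal{A}'$. Consequently $\pi$ sends $\overline{\phi_{\mathcal{A},\CC}(\CC^n)}$ onto a dense subset of $\CC^n$, forcing $\dim \overline{\phi_{\mathcal{A},\CC}(\CC^n)} \geq n$; the reverse inequality holds automatically since the source has dimension $n$. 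Irreducibility of the closure is inherited from the irreducibility of $\CC^n$. Finally, since $n < m$, the closure is a proper subvariety of $\CC^m$, so its complement is Zariski open and dense, and any point in it has empty fiber.

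For the real statement, the image $\phi_{\mathcal{A},\RR}(\RR^n)$ is a semialgebraic subset of $\RR^m$. I would verify that it has real dimension exactly $n$ by inspecting the Jacobian (\ref{eq:jacobian}) at a generic real point: the $n \times n$ minor formed from the first $n$ rows equals $a_1 \cdots a_n \det\bigl(x_i^{a_j-1}\bigr)_{i,j=1}^n$, which is a nonzero polynomial in the $x_i$. Hence the Zariski closure of $\phi_{\mathcal{A},\RR}(\RR^n)$ in $\RR^m$ is a proper subvariety of dimension $n$, yielding empty generic real fibers. Irreducibility over $\RR$ follows because the defining ideal in $\RR[y]$ extends, under base change to $\CC$, to the prime ideal of $\overline{\phi_{\mathcal{A},\CC}(\CC^n)}$, so the real coordinate ring embeds in a domain and is itself a domain.

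I do not foresee any serious obstacle: the corollary is essentially a dimension-count reading of Proposition \ref{lem: complete intersection}. The only mild subtlety is in the real case, where one must confirm that the real image genuinely attains the full dimension $n$; the Jacobian computation above handles this cleanly.
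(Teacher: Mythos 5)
Your argument is correct and follows the route the paper itself indicates (the corollary is stated without a detailed proof; the authors simply remark that it is ``derived from the $m=n$ case in Proposition \ref{lem: complete intersection}, namely by adding additional constraints''). You make this precise: projecting onto the $n$ coordinates indexed by some $\mathcal{A}'\subset\mathcal{A}$ of size $n$ and invoking dominance from Proposition \ref{lem: complete intersection}(i) gives $\dim\overline{\mathrm{im}\,\phi}=n<m$, irreducibility comes for free as the closure of the image of the irreducible source, and emptiness of the generic fiber follows since the complement of a proper closed subvariety is Zariski open and dense. The real half is also handled correctly, though you could streamline the irreducibility step: $\ker\bigl(\phi^{*}:\RR[y]\to\RR[x]\bigr)$ is automatically prime because $\RR[x]$ is a domain, and it equals the vanishing ideal of $\overline{\phi_{\mathcal{A},\RR}(\RR^n)}$, so no base change to $\CC$ is needed.
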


Describing the image of $\phi$ will be our topic in Section \ref{sec3} and \ref{sec: recovery from norms}.
A generic point $c$ in that image can be created easily, namely by 
setting $c = \phi(z)$ where $z=(z_1,\ldots,z_n)$ is any generic point in $\CC^n$.
We are interested in the fiber $\phi^{-1}(c)$ over such a point $c$. By construction,
that fiber is non-empty: it contains all $n!$ points that are obtained from $z$
by permuting coordinates. For the remainder of this section, assume $\gcd(a_1,\ldots,a_m)=1$. 
Then we conjecture that there are no other points in that fiber.
This would mean that the set $\{z_1,\ldots,z_n\}$  can be recovered uniquely from
any $m$ of its power sums, provided $m \geq n+1$.

\begin{conjecture}\label{conj.:injectivity over CC}
The recovery of a set of $n$ complex numbers from $n+1$ power sums with coprime powers is unique.  
To be precise, for $m = n+1$, the map $\phi$ is generically injective. This means that,
for generic points $z \in \CC^n$, the fiber
$\phi^{-1}(\phi(z))$ coincides with the set of $n!$
coordinate permutations of $z$.
\end{conjecture}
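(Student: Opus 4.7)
The plan is to translate the statement into field theory and then exploit the scaling grading. Since $\phi$ is invariant under permutations of $x_1,\ldots,x_n$, the fiber $\phi^{-1}(\phi(z))$ always contains the $n!$ coordinate permutations of a generic $z$, and the conjecture asserts no other points appear. Passing to the symmetric quotient $\CC^n/S_n \cong \CC^n$ via elementary symmetric polynomials, the claim becomes that the induced morphism $\bar\phi \colon \CC^n \to \CC^{n+1}$, $(e_1,\ldots,e_n) \mapsto (p_{a_1},\ldots,p_{a_{n+1}})$ with $p_k = \sum_i x_i^k$, is birational onto its image. Newton's identities give $\CC[p_1,\ldots,p_n] = \CC[e_1,\ldots,e_n]$, so $p_1,\ldots,p_n$ are algebraically independent over $\CC$, and the generic fiber cardinality of $\bar\phi$ equals the degree of the field extension $\CC(p_1,\ldots,p_n)/\CC(p_{a_1},\ldots,p_{a_{n+1}})$. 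The conjecture thus reduces to the field equality
\[
\CC(p_{a_1},\ldots,p_{a_{n+1}}) \,=\, \CC(p_1,\ldots,p_n).
\]

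Next I would use the weighted scaling to incorporate the coprimality hypothesis. Grading $\CC[x_1,\ldots,x_n]$ by $\deg(x_i)=1$ makes $p_k$ homogeneous of degree $k$, so $\phi$ is equivariant for the diagonal $\CC^*$-action $x \mapsto tx$ on the source and the weighted action $c_j \mapsto t^{a_j}c_j$ on the target. Taking quotients gives a rational map $\tilde\phi \colon \PP^{n-1} \dashrightarrow \PP(a_1,\ldots,a_{n+1})$ between varieties of the same dimension $n-1$. If generic injectivity of $\tilde\phi$ modulo $S_n$ can be established, any $w \in \phi^{-1}(\phi(z))$ for generic $z$ must satisfy $w = \lambda\sigma(z)$ for some $\sigma \in S_n$ and $\lambda \in \CC^*$; substituting back into $\phi(w)=\phi(z)$ forces $\lambda^{a_j}=1$ for all $j$, and then $\gcd(a_1,\ldots,a_{n+1})=1$ forces $\lambda=1$. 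The hypothesis is visibly sharp: if $d=\gcd>1$, any primitive $d$-th root of unity gives an additional deck transformation outside $S_n$.

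The main obstacle is establishing generic injectivity of $\tilde\phi$ modulo $S_n$, which is precisely why the statement is recorded as a conjecture rather than a theorem. A natural attack is a monodromy computation: identify the Galois group $G$ of the Galois closure of the function-field extension $\CC(x_1,\ldots,x_n)/\CC(p_{a_1},\ldots,p_{a_{n+1}})$ and show that $G=S_n$, which excludes deck transformations beyond coordinate permutation. A complementary route is a direct degree count on $\bar\phi$: intersect the image with a generic hyperplane, use the Jacobian matrix \eqref{eq:jacobian} to locate the ramification, and compute the number of sheets against the B\'ezout-type bound $a_1 a_2 \cdots a_{n+1}$ with corrections of the kind studied for square systems in Section~\ref{sec: square systems}. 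Either route will hinge on a delicate interaction between the arithmetic of the exponents $a_j$ and the algebra of symmetric functions, and this is the genuine heart of the problem.
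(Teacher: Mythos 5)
The statement you were asked to address is explicitly labeled a \emph{conjecture} in the paper, and the authors offer no proof of it: the only supporting evidence in the text is the remark that Conjectures~\ref{conj.:injectivity over CC} and~\ref{conj:injectivity2} were verified computationally (via Gr\"obner bases) in a range of small cases. You correctly recognize this and are honest that your argument does not close the gap --- your final paragraph says as much. So there is no ``paper's own proof'' to compare against; the right thing to check is whether your reductions are sound and whether they genuinely isolate the open core.

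Your reductions are correct. Passing to the $S_n$-quotient and rephrasing the claim as a field-theoretic equality $\CC(p_{a_1},\ldots,p_{a_{n+1}}) = \CC(p_1,\ldots,p_n)$ is a clean and standard translation; the use of Newton's identities to justify $\CC[p_1,\ldots,p_n]=\CC[e_1,\ldots,e_n]$ is correct. Your $\CC^*$-equivariance argument is also valid, and it is the most illuminating part of the proposal: it cleanly explains the role of the hypothesis $\gcd(a_1,\ldots,a_{n+1})=1$, since if $\phi(w)=\phi(z)$ and $w=\lambda\sigma(z)$ then $\lambda^{a_j}=1$ for all $j$, forcing $\lambda=1$ exactly when the exponents are coprime (and for generic $z$ all $\phi_j(z)\neq 0$, so the step is legitimate). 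You also correctly note the two implications so that the affine and projective formulations of generic injectivity mod~$S_n$ are equivalent under the coprimality assumption. None of this, however, resolves the conjecture; both the monodromy route and the direct degree count you sketch are plausible but untested, and they both run straight into the same difficulties as the regular-sequence questions of \cite{CKW} that the paper treats in Section~\ref{sec: square systems}. In short: your structural analysis is sound and adds a useful observation about why coprimality is the natural hypothesis, but it stops exactly where the paper stops, at an unproven generic-injectivity claim.
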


We are also interested in the following more general  conjecture.
Let $\tau = (\tau_1,\ldots,\tau_n)$ be in $ \RR^n_{> 0}$  and consider the map
$\psi: \CC^n \rightarrow \CC^m$, where $\psi_j = \sum_{i=1}^n \tau_i x_i^{a_j}$.
Let ${\rm Stab}(\tau)$ be the subgroup of the symmetric group $S_n$
consisting of all coordinate permutations that fix~$\tau $.

\begin{conjecture}\label{conj:injectivity2}
For generic points $z \in \CC^n$, 
the fiber $\psi^{-1}(\psi(z))$ is precisely the set of all
coordinate permutations of $z$.
The cardinality of this set is equal to $|{\rm Stab}(\tau)|$.
\end{conjecture}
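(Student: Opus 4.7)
The plan is to split the conjecture into matching lower and upper bounds on $|\psi^{-1}(\psi(z))|$, both equal to $|{\rm Stab}(\tau)|$. The lower bound is essentially formal; the upper bound is the main content.

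For the lower bound I would first check that, for generic $z \in \CC^n$ and $\sigma \in S_n$, one has $\psi(\sigma \cdot z) = \psi(z)$ if and only if $\sigma \in {\rm Stab}(\tau)$. Indeed,
\[ \psi_j(\sigma \cdot z) - \psi_j(z) \;=\; \sum_{k=1}^n (\tau_{\sigma^{-1}(k)} - \tau_k)\, z_k^{a_j}, \]
and for generic $z$ the columns of the matrix $(z_k^{a_j})_{j,k}$ are linearly independent, since each $n \times n$ minor is a Vandermonde determinant times a Schur polynomial, as used in Proposition \ref{lem: complete intersection}. This forces $\tau_{\sigma^{-1}(k)} = \tau_k$ for every $k$. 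Since generic $z$ has pairwise distinct entries, the orbit ${\rm Stab}(\tau)\cdot z$ then has exactly $|{\rm Stab}(\tau)|$ elements, giving the lower bound and also the claimed cardinality count.

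For the upper bound I would attempt to reduce to Conjecture \ref{conj.:injectivity over CC}. Let $B_1,\ldots,B_k$ be the orbits of ${\rm Stab}(\tau)$ on $\{1,\ldots,n\}$ and let $\alpha_r$ be the common value of $\tau_i$ on $B_r$, so that $\alpha_1,\ldots,\alpha_k$ are pairwise distinct. The map $\psi$ factors through the quotient $\CC^n/{\rm Stab}(\tau) \cong \prod_{r=1}^k {\rm Sym}^{|B_r|}(\CC)$, and it suffices to show that the induced map $\bar{\psi}$ is generically injective. Writing $y_{r,s}$ for the $B_r$-coordinates, this amounts to the claim: if
\[ \sum_{r=1}^k \alpha_r \sum_{s=1}^{|B_r|} y_{r,s}^{a_j} \;=\; \sum_{r=1}^k \alpha_r \sum_{s=1}^{|B_r|} \widetilde{y}_{r,s}^{a_j} \qquad \text{for all } j, \]
then the multisets $\{y_{r,s}\}_s$ and $\{\widetilde{y}_{r,s}\}_s$ coincide for each $r$. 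The case $k=1$ is precisely Conjecture \ref{conj.:injectivity over CC}, since the overall scalar $\alpha_1$ divides out.

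The hard part is $k \geq 2$, where contributions from different blocks are linearly combined by the weights $\alpha_r$. A natural attack is a deformation and monodromy argument: view $\tau$ as varying in $\RR^n_{>0}$ and follow how the fiber breaks up as one interpolates between $\tau=(1,\ldots,1)$, where Conjecture \ref{conj.:injectivity over CC} predicts fiber size $n!$, and a $\tau$ with all coordinates distinct, where the fiber should collapse to a single point. One must verify that the $n!$ permutation points of the symmetric case split into ${\rm Stab}(\tau)$-orbits, and crucially that no spurious solutions appear from elsewhere in $\CC^n$ along the way. This is expected to require a careful analysis of the branch locus of $\psi$ together with an upper-semicontinuity argument for the degree of $\psi$ onto its image as $\tau$ moves, and it is precisely the step where the main obstacle lies.
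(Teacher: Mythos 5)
This statement is an open conjecture in the paper; the authors offer no proof, only the remark that they confirmed it computationally via Gr\"obner bases in a range of small cases. So there is no proof in the paper against which to match your argument, and any complete proof would already be new.

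Your lower-bound and cardinality argument is correct and essentially self-contained. With the convention $(\sigma\cdot z)_i=z_{\sigma^{-1}(i)}$ one gets $\psi_j(\sigma\cdot z)-\psi_j(z)=\sum_k(\tau_{\sigma(k)}-\tau_k)z_k^{a_j}$, but this is the same condition on $\sigma$, and the key point stands: in the regime $m\geq n$ the matrix $(z_k^{a_j})_{j,k}$ has full column rank for generic $z$ because its $n\times n$ minors are generalized Vandermonde determinants (Vandermonde times a Schur polynomial), so they cannot all vanish identically. Hence $\psi(\sigma\cdot z)=\psi(z)$ forces $\sigma\in{\rm Stab}(\tau)$, and since a generic $z$ has distinct entries, the orbit ${\rm Stab}(\tau)\cdot z$ has exactly $|{\rm Stab}(\tau)|$ points. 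This matches what the authors implicitly use when invoking the conjecture in the proof of Theorem \ref{thm:weighted}, where $|{\rm Stab}(\tau)|$ serves as the unconditional lower bound on the fiber degree.

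The gap is exactly where you say it is, and it is worth naming precisely because your phrasing slightly understates the problem. The reduction of the $k=1$ case to Conjecture \ref{conj.:injectivity over CC} is correct (the constant weight $\alpha_1$ divides out), but that conjecture is itself open in the paper, so even the $k=1$ case is not established. For $k\geq 2$, the issue is not only that the $n!$ permutation points might fail to split cleanly into ${\rm Stab}(\tau)$-orbits as $\tau$ deforms away from $(1,\ldots,1)$; it is that the fiber over $\psi(z)$ is not a continuous specialization of the fiber at $\tau=(1,\ldots,1)$, because the target point $\psi(z)$ itself moves with $\tau$. A degeneration or monodromy argument would therefore need to work with the incidence variety $\{(z,\tau,x):\psi_\tau(x)=\psi_\tau(z)\}$ and control both the branch locus and the behavior at infinity (compare the discussion of (SI) in Section \ref{sec: square systems}, where solutions at infinity genuinely appear and alter degree counts). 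Upper semicontinuity of fiber cardinality goes the wrong way for the bound you want, so the missing ingredient is a mechanism that excludes extra components of the fiber for special $\tau$, not merely for generic $\tau$. As you note, this is precisely the open content of the conjecture.
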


By computing Gr\"obner bases,
we confirmed Conjectures \ref{conj.:injectivity over CC} and \ref{conj:injectivity2} 
 for a range of small~cases.

\section{Square Systems}\label{sec: square systems}

We  here fix $n=m$, so we study the square case.
By Proposition  \ref{lem: complete intersection},
our system (\ref{eq:system}) has~finitely many solutions in $\mathbb{C}^n$.
  Our aim is to find their number. We study this for $n=2$ (Proposition~\ref{prop:n=2}) and
   $n=3$ (Conjecture \ref{conj: number of solutions}).
  This generalizes a conjecture of Conca, Krattenthaler and Watanabe \cite[Conjecture 2.10]{CKW}.
  We conclude with a discussion of the general case $n \geq 4$. 
 
 \smallskip
 
 Our point of departure is a result which links
Proposition \ref{lem: complete intersection}  with B\'ezout's Theorem.

\begin{proposition}
For general measurements $c \in \CC^n$, 
the square system (\ref{eq:system}) has finitely many complex solutions $x \in \CC^n$.
The number of these solutions is bounded above by $a_1a_2 \cdots a_n$.
\end{proposition}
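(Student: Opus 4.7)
My plan is to combine the finiteness part of Proposition \ref{lem: complete intersection} with the classical B\'ezout inequality, applied to polynomials of degrees $a_1, \ldots, a_n$. Neither step requires any new input. Specializing Proposition \ref{lem: complete intersection}(ii) to the case $m = n$, for generic $c \in \CC^n$ the ideal $I_c$ is radical and its variety $V(I_c) \subset \CC^n$ has dimension $n - m = 0$; hence $V(I_c)$ is a finite set of reduced points. This handles the finiteness assertion.

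For the upper bound, I would invoke the affine B\'ezout inequality: whenever $f_1, \ldots, f_n \in \CC[x_1, \ldots, x_n]$ cut out a zero-dimensional subscheme of $\CC^n$, the length of that subscheme is at most $\prod_j \deg(f_j)$. Taking $f_j = \phi_j(x) - c_j$, which has degree $a_j$, gives exactly the bound $a_1 a_2 \cdots a_n$. Equivalently, one may homogenize each $f_j$ to a degree-$a_j$ hypersurface $\tilde{f}_j \subset \PP^n$ and observe that the affine solution set embeds into the projective intersection $\bigcap_j V(\tilde{f}_j)$; projective B\'ezout then yields the same estimate, with the affine count potentially strictly smaller due to contributions from solutions at infinity.

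There is no substantial obstacle here, as each step is standard. What is genuinely interesting is not the upper bound itself but the extent to which it fails to be tight: the drop from $70$ to $66$ in Example \ref{ex:regimes} reflects contributions from the base locus at infinity $V(\phi_1, \ldots, \phi_n) \subset \PP^{n-1}$, which the finer conjectural count introduced later in Section \ref{sec: square systems} is presumably designed to capture.
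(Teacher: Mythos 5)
Your argument is correct and matches the approach the paper implicitly intends: the proposition is stated without proof, immediately after the sentence ``Our point of departure is a result which links Proposition \ref{lem: complete intersection} with B\'ezout's Theorem,'' so finiteness via Proposition \ref{lem: complete intersection}(ii) with $m=n$ together with the affine/projective B\'ezout bound is exactly the intended reasoning. Your closing remark about the drop from $70$ to $66$ being governed by the base locus at infinity also correctly anticipates how the paper refines the count.
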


We now define the {\em homogenized system} (HS) to be the system (\ref{eq:system}), where $c_j$ is replaced by $c_j x_{0}^{a_j}$. Note that (HS) has its solutions in $\PP^n$.
What we are interested in for our recovery problem are
 the solutions that do not lie in the hyperplane at infinity $\{x_0 = 0\}$.
Next, we define the {\em system at infinity} (SI) to be (\ref{eq:system}) with $c= 0$.
The solutions of (SI) are in $\PP^{n-1}$. The cone over that projective scheme is the zero fiber of 
the map $\phi$.
We will use the notations (HS) and (SI) both for the systems of equations and the projective schemes defined by them. 

\begin{remark}
The scheme (HS) is in general not the projective closure of the affine part defined by~(\ref{eq:system}), as it can contain higher-dimensional components. For example, set $n=m=4$, and let $\mathcal{A}$ 
consist of four odd coprime integers. The variety in $\CC^4$ defined by the system (\ref{eq:system}) is zero-dimensional by Lemma \ref{lem: complete intersection}. However, the scheme
 (HS) is not zero-dimensional in $\PP^4$, since it contains the lines defined by $x_i=-x_j$, $x_k=-x_l$, $x_0=0$ for $\{i,j,k,l\}=\{1,2,3,4\}$.
\end{remark}

The solutions of (HS) that lie in  the hyperplane $\{x_0 = 0\}$ are
precisely the solutions to (SI). However, the multiplicities are different.
If the variety (SI) in $\PP^{n-1}$ is finite, then
 the number of solutions to (\ref{eq:system}) in $\CC^{n}$ equals $a_1a_2 \cdots a_n$ minus the total length of (HS) along~(SI). 
For $n=2$, this observation fully determines the number of solutions 
to (\ref{eq:system}) in terms of $\mathcal{A}$.

\begin{proposition}[$n=2$]\label{prop:n=2}
Assume $a_1 < a_2$.
For generic $(c_1,c_2)\in \CC^2$,
the number of common solutions in $\CC^2$ to the equations
$x_1^{a_1} + x_2^{a_1} = c_1$ and $x_1^{a_2} + x_2^{a_2} = c_2$ equals
$\,a_1 (a_2 - {\rm gcd}(a_1,a_2))\,$ if both
$a_1/{\rm gcd}(a_1,a_2)$ and 
$a_2/{\rm gcd}(a_1,a_2)$ are odd. It equals
the B\'ezout number $a_1a_2$ otherwise. 
\end{proposition}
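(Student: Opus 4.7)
The plan is to pass to the projective closure in $\PP^2$, apply B\'ezout's theorem, and subtract the intersection contribution along the line at infinity $\{x_0 = 0\}$. Set $f_j = x_1^{a_j} + x_2^{a_j} - c_j x_0^{a_j}$; these are forms of degree $a_j$, coprime for generic $c$, so the total intersection in $\PP^2$ has length $a_1 a_2$. By Proposition \ref{lem: complete intersection}, for generic $c$ the affine ideal $I_c$ is radical, so every affine solution is simple, and the affine count equals $a_1 a_2$ minus the total length of (HS) supported on (SI).

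The next step is to describe (SI) set-theoretically. Setting $d = \gcd(a_1, a_2)$, $\alpha_j = a_j/d$, and dehomogenizing $x_2 = 1$, the points in (SI) are the $t \in \CC$ with $t^{a_1} = t^{a_2} = -1$. Squaring both equations gives $t \in \mu_{2a_1} \cap \mu_{2a_2} = \mu_{2d}$, hence $t^d \in \{\pm 1\}$; since $t^d = 1$ would force $t^{a_j} = 1$, one needs $t^d = -1$, which then gives $t^{a_j} = (-1)^{\alpha_j} = -1$ precisely when both $\alpha_j$ are odd. Conversely, when both $\alpha_j$ are odd, every one of the $d$ roots of $t^d = -1$ satisfies the system. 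Thus (SI) is either empty --- in which case the affine count is the B\'ezout bound $a_1 a_2$ --- or consists of $d$ distinct reduced points $p_{t_0} = [t_0 : 1 : 0]$, the latter occurring exactly when both $\alpha_j$ are odd.

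In the second case, it remains to compute the intersection multiplicity of $f_1, f_2$ at each $p_{t_0}$. Dehomogenizing $x_2 = 1$ and using local coordinates $u = x_0$, $v = x_1 - t_0$, the identity $t_0^{a_j} = -1$ together with a Taylor expansion yields
\[ f_j \,=\, a_j t_0^{a_j - 1} v \,+\, O(v^2) \,-\, c_j u^{a_j}. \]
Since the $v$-coefficient is a unit in the local ring, the implicit function theorem solves $f_1 = 0$ as a power series $v = \bigl(c_1/(a_1 t_0^{a_1-1})\bigr) u^{a_1} + O(u^{2a_1})$, and substituting into $f_2$ produces a power series in $u$ with leading term $(a_2/a_1)\, c_1\, t_0^{a_2 - a_1}\, u^{a_1}$, nonzero for generic $c_1$. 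Thus each $p_{t_0}$ contributes multiplicity $a_1$, the total contribution is $a_1 d$, and the affine count is $a_1 a_2 - a_1 d = a_1(a_2 - d)$, as claimed. The main obstacle is the multiplicity computation, specifically ruling out any lower-order $u$-term during substitution --- but this is transparent from the estimates $-c_2 u^{a_2} = O(u^{a_2})$ with $a_2 > a_1$ and $v^k = O(u^{k a_1})$ for $k \ge 2$.
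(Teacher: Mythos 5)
Your proposal is correct and follows essentially the same strategy as the paper: apply B\'ezout in $\PP^2$, identify the points of (SI) at infinity, and compute the intersection multiplicity at each such point to be $a_1$ by restricting $f_2$ to the smooth curve $\{f_1=0\}$ and reading off the order of vanishing in a local parameter. The only notable difference is presentational: the paper first treats $\gcd(a_1,a_2)=1$ and then reduces the general case via the observation $x_i\mapsto x_i^g$, whereas you identify the $d=\gcd(a_1,a_2)$ points of (SI) directly and handle them uniformly, which is arguably cleaner; your power-series/implicit-function-theorem bookkeeping is equivalent to the paper's argument that $x_0$ is a uniformizer on the curve and $f_2$ has valuation $a_1$.
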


\begin{proof}
First assume ${\rm gcd}(a_1,a_2)=1$.
The binary forms $x_1^{a_1} + x_2^{a_1}$ and
$x_1^{a_2} + x_2^{a_2}$ are relatively prime,
unless both $a_1 $ and $a_2$ are odd, so
 $x_1+x_2$ divides both forms.
In the former~case, (SI) has no solutions, so the number of 
solutions to (\ref{eq:system}) equals the Bézout number $a_1 a_2$.
If $a_1$ and $a_2$ are odd, then (SI)~$= \{x_1+x_2=0\}$
defines the point $(1:-1)$ on the line $\PP^1$, corresponding to the point $P=(1:-1:0)$ of
the scheme (HS). The multiplicity of (HS) at $P$ can be computed locally in the 
chart $\{x_2\neq0\}$ by setting $x_2=-1$. It is the multiplicity at the point $(0,1)$ of the affine scheme in $\mathbb{C}^2$ defined by the ideal 
$\,I \, = \, \langle \,x_1^{a_1}-x_0^{a_1}-1,\,x_1^{a_2}-x_0^{a_2}-1\rangle\,$. 

Write $m_{P'} =  \langle x_0, x_1-1 \rangle$ for 
the maximal ideal of $P'=(0,1)$ in the local ring $\mathcal{O}_{P'}$ of the curve 
$V(x_1^{a_1}-x_0^{a_1}-1) \subset \CC^2$.  In $\mathcal{O}_{P'}$ we have $x_1-1=\tfrac{x_1^{a_{1}}-1}{u}=\tfrac{x_0^{a_1}}{u}$, where $u$ is a unit.
In fact, $u$ is a certain product of cyclotomic polynomials in $x_1$. Therefore, $x_0$ is a uniformizer,
i.e., $m_{P'} = \langle x_0 \rangle$, and
 $x_1-1$ is contained in $m_{P'}^{a_1}\setminus m_{P'}^{a_1+1}$.
 From this we conclude
 \[
x_1^{a_2}-x_0^{a_2}-1\,\,=\,\,\bigl(((x_1-1)+1)^{a_2}-x_0^{a_2}-1\bigr)\,\,=\,\,\sum_{i=1}^{a_2}\binom{a_2}{i}(x_1-1)^{i}-x_0^{a_2}\,\, \in\,\, m_{P'}^{a_1}\setminus m_{P'}^{a_1+1}.
\]
Hence, $x_1^{a_2}-x_0^{a_2}-1$ vanishes to order $a_1$ at $P'$.
We  conclude that the multiplicity of (HS) in $P$ is $a_1$. Therefore, the system (\ref{eq:system}) has  $a_1(a_2-1)=a_1(a_2-\gcd(a_1,a_2))$ solutions in $\CC^2$. 

Finally, suppose that $a_1 $ and $a_2$ are not relatively prime,
and set $g = {\rm gcd}(a_1,a_2)$. We replace $x_1,x_2$ by $x_1^g, x_2^g$, and we
 apply our previous analysis to the two equations
  \begin{equation}\label{eq:smaller system}\left(x_1^g\right)^{a_1/g} + \left(x_2^g\right)^{a_1/g}\,=\,c_1
  \quad {\rm and} \quad
  \left(x_1^g\right)^{a_2/g} + \left(x_2^g\right)^{a_2/g}\,=\,c_2.\end{equation}
  The system (\ref{eq:smaller system}) has solutions at infinity if and only if $a_1/g$ and $a_2/g$ are both odd. In that case, we have (SI) $= \{x_1^g+x_2^g=0\}$, which defines the $g$ points $(\zeta^i:1)_{i=1,\ldots,g}$ in $\mathbb{P}^1$, where $\zeta$ is a primitive $g$-th root of $-1$. Each of the corresponding points in (HS) has multiplicity $a_1$.
  This can be computed analogously  to what we did for $P$ in the argument above.
\end{proof}

We turn to $n = 3$, and we assume $\gcd(a_1,a_2,a_3)=1$.
 Our problem is now much harder.
It is unknown when (SI) has any solutions in $\PP^2$.
No solutions means that the power sums $\phi_1,\phi_2,\phi_3$ form a regular sequence.
Conca, Krattenthaler and Watanabe 
 \cite[Conjecture 2.10]{CKW} suggest that this holds if and only if
  $a_1a_2a_3\equiv0 \,{\rm mod }\, 6$; we call this the {\em CKW conjecture}.
  They prove the `only if' part in \cite[Lemma 2.8]{CKW}. 
  Another proof for this part is given by the next lemma.
Set $\mathcal{A}_p = \{a_1 \,{\rm mod}\, p,
a_2 \,{\rm mod} \,p,
a_3 \,{\rm mod}\, p \}$ for $p=2,3$.
Thus, $\mathcal{A}_2 \subseteq  \{0,1\} $ and
$\mathcal{A}_3 \subseteq  \{0,1,2\} $.
We assumed $\mathcal{A}_p \not= \{0\}$ for $p=2,3$.
Let $\zeta$ be a primitive cube root of unity.

  \begin{lemma} \label{lem:somesolu}
   The points $(1:-1:0),(1:0:-1),$ and $(0:1:-1)$ are in (SI) if and only if $0\not\in\mathcal{A}_2$, and the points  $(1:\zeta:\zeta^2)$ and $(1:\zeta^2:\zeta)$ are in (SI) if and only if $0\not\in\mathcal{A}_3$. 
  \end{lemma}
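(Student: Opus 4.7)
The plan is to verify both equivalences by direct substitution of the proposed points into the three power sum equations defining (SI), namely $x_1^{a_j}+x_2^{a_j}+x_3^{a_j}=0$ for $j=1,2,3$, and to use elementary identities for roots of unity.

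For the first claim, I substitute $(1:-1:0)$ into $\phi_j$, obtaining $1^{a_j}+(-1)^{a_j}+0^{a_j}=1+(-1)^{a_j}$. This value is $0$ precisely when $a_j$ is odd, and equals $2$ when $a_j$ is even. Hence $(1:-1:0)$ lies in (SI) if and only if every $a_j$ is odd, which is the condition $0\notin\mathcal{A}_2$. The computations for $(1:0:-1)$ and $(0:1:-1)$ are identical up to permutation of the three coordinates, so the same equivalence holds for those two points.

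For the second claim, recall that $\zeta$ is a primitive cube root of unity, so $1+\zeta+\zeta^2=0$ and $\zeta^3=1$. Substituting $(1:\zeta:\zeta^2)$ into $\phi_j$ yields
\[
1+\zeta^{a_j}+\zeta^{2a_j}.
\]
Since $\zeta^{a_j}$ depends only on $a_j \bmod 3$, the sum is $3$ when $a_j\equiv 0 \pmod 3$, and it equals $1+\zeta+\zeta^2=0$ when $a_j\equiv 1\pmod 3$ and $1+\zeta^2+\zeta^4=1+\zeta^2+\zeta=0$ when $a_j\equiv 2\pmod 3$. Thus $(1:\zeta:\zeta^2)$ satisfies all three defining equations of (SI) if and only if no $a_j$ is divisible by $3$, which is the condition $0\notin\mathcal{A}_3$. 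The argument for $(1:\zeta^2:\zeta)$ is symmetric, since swapping $\zeta\leftrightarrow\zeta^2$ does not change the set $\{\zeta^{a_j},\zeta^{2a_j}\}$.

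There is no serious obstacle here: the proof is a short, completely explicit calculation. The only thing worth emphasizing is the interpretation step—translating the condition ``all $a_j$ have the same nonzero residue modulo $p$'' into the shorthand $0\notin\mathcal{A}_p$, which is immediate from the definition of $\mathcal{A}_p$ given just before the lemma.
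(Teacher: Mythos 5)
Your proof is correct and takes essentially the same route as the paper: direct substitution into the power sums and elementary facts about roots of unity. The paper's version is more compressed (it states only that when $p \mid a$ the power sum at $(1,\xi,\ldots,\xi^{p-1})$ evaluates to $p$, leaving the vanishing in the complementary case implicit), so your write-up is if anything more complete. One small slip in your closing remark: you paraphrase the condition as ``all $a_j$ have the same nonzero residue modulo $p$,'' but $0\notin\mathcal{A}_p$ means only that no $a_j$ is divisible by $p$ --- for $p=3$ the residues can be a mix of $1$'s and $2$'s. Your computation in the body of the proof handles this correctly (both nonzero residues give vanishing), so only the phrasing of the final sentence needs adjusting.
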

  
\begin{proof}
If $n$ is a prime, $\xi$ is a primitive $n$-th root of unity, and
$a$ is a multiple of $n$, then the power sum $x_1^a+x_2^a + \cdots + x_n^a$
does not vanish at $(1,\xi,\ldots,\xi^{n-1})$, but rather
it evaluates to~$n$. We obtain the assertion by specializing to
$n=2$ and $n=3$.
\end{proof}
  
The CKW conjecture states that (SI) has no solutions when
$0 \in \mathcal{A}_2 \cap \mathcal{A}_3$. It is shown in \cite[Theorem 2.11]{CKW} that 
this holds if $\{1,n\}\subset\mathcal{A}$ with $2\leq n\leq7$, or if $\{2,3\}\subset\mathcal{A}$.
The proof rests on the expression of power sums in terms of elementary symmetric polynomials. 

In what follows we present conjectures that imply the CKW conjecture.
We begin with a converse to Lemma \ref{lem:somesolu}.
 Theorems \ref{thm:points at infinity} and \ref{thm:multiplicities} verify all
conjectures for some new cases.

\begin{conjecture}\label{conj.: roots of unity} We have $
\rm{(SI)}\subseteq \{(1\!:\!-1:0),(1:0:\!-1),(0:1:\!-1),(1\!:\!\zeta \!:\!\zeta^2),(1\!:\!\zeta^2\! : \!\zeta)\}$.
\end{conjecture}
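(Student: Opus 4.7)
My plan is to reduce the conjecture to a claim about the elementary symmetric polynomials $e_1, e_2, e_3$ of $(x_1,x_2,x_3)$, and then to attack that claim using Newton's identities together with the hypothesis $\gcd(a_1,a_2,a_3)=1$.

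First, I would dispose of the degenerate subcase in which at least one of $x_1,x_2,x_3$ vanishes. If two of them vanish, each $p_{a_i}$ reduces to a single power that must also vanish, forcing the third coordinate to be zero and contradicting that we have a projective point. If exactly one coordinate vanishes, say $x_3=0$, the system becomes $(x_2/x_1)^{a_i}=-1$ for $i=1,2,3$. Squaring each equation yields $(x_2/x_1)^{2a_i}=1$, so the multiplicative order of $x_2/x_1$ in $\CC^\ast$ divides $\gcd(2a_1,2a_2,2a_3)=2\gcd(a_1,a_2,a_3)=2$. The value $+1$ is ruled out since $1^{a_i}=1\ne-1$, so $x_2/x_1=-1$ and the point is $(1:-1:0)$ up to a coordinate permutation, as desired.

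For the main case I would assume $x_1x_2x_3\ne 0$, so $e_3\ne 0$. The remaining two target points $(1:\zeta:\zeta^2)$ and $(1:\zeta^2:\zeta)$ are exactly the projective points whose characteristic polynomial has the form $t^3-e_3$, equivalently $e_1=e_2=0$. Thus the conjecture reduces to the purely algebraic claim: if $x_1,x_2,x_3\in\CC^\ast$ satisfy $p_{a_i}=0$ for three positive integers with $\gcd=1$, then $e_1=e_2=0$. To attack this, I would invoke Newton's recursion
\[
p_k \,=\, e_1 p_{k-1} - e_2 p_{k-2} + e_3 p_{k-3} \qquad (k\ge 4),
\]
together with $p_1=e_1$, $p_2=e_1^2-2e_2$, and $p_3=e_1^3-3e_1e_2+3e_3$, in order to rewrite each $p_{a_i}$ as an explicit polynomial in $(e_1,e_2,e_3)$. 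The vanishing of these three polynomials cuts out an affine variety in $\CC^3$, and the goal is to show it is contained in $\{e_1=e_2=0\}$. For any fixed triple $(a_1,a_2,a_3)$ this is a finite Gr\"obner basis calculation, which is presumably the mechanism behind Theorem \ref{thm:points at infinity} for specific new cases.

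The hard part will be a uniform argument valid for every coprime triple. A natural route is to apply the Skolem-Mahler-Lech theorem to the linear recurrence sequence $(p_k)$: its zero set $Z\subset\NN$ is a finite union of arithmetic progressions together with a finite exceptional set. I would attempt to argue that the presence of three elements of $Z$ with gcd $1$ forces the characteristic polynomial $q(t)=t^3-e_1t^2+e_2t-e_3$ to be of the form $t^3-e_3$, since any other shape either keeps $Z$ too sparse to accommodate three coprime indices or makes the arithmetic progressions in $Z$ share a common modulus $d>1$ that divides every element, contradicting $\gcd(a_1,a_2,a_3)=1$. Controlling this interplay between the moduli of the arithmetic progressions in $Z$ and the coprimality hypothesis is the delicate point, and it is presumably the reason the statement remains a conjecture rather than a theorem.
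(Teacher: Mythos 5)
This statement is labeled a \emph{conjecture} in the paper, and the paper offers no proof of it; Theorem~\ref{thm:points at infinity} only verifies it computationally, by Gr\"obner basis calculations, for all $a_1+a_2+a_3 \le 300$. So there is no paper proof to compare against, and your closing acknowledgement --- that the key step remains open --- is exactly correct.

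Your partial progress is nevertheless genuine. The degenerate case with a vanishing coordinate is handled cleanly and rigorously: if $x_3 = 0$ then $(x_2/x_1)^{a_i} = -1$ for all $i$, squaring shows that the multiplicative order of $x_2/x_1$ divides $\gcd(2a_1,2a_2,2a_3) = 2$, and since the ratio cannot equal $1$ it must be $-1$. This is an unconditional proof of the boundary part of the conjecture, which the paper does not spell out (Lemma~\ref{lem:somesolu} only addresses when these points \emph{do} lie in (SI), not that no others with a zero coordinate can). Your reduction of the interior case ($x_1x_2x_3 \ne 0$) to the claim that $p_{a_1} = p_{a_2} = p_{a_3} = 0$ forces $e_1 = e_2 = 0$ is also correct, since $e_1 = e_2 = 0$ and $e_3 \ne 0$ is exactly what characterizes the two points $(1:\zeta:\zeta^2)$ and $(1:\zeta^2:\zeta)$.

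The gap is in closing that interior case, and here the Skolem--Mahler--Lech sketch falls short of a proof. The structure theorem says the zero set of the recurrence $(p_k)$ is a finite union of arithmetic progressions together with a finite exceptional set, but it gives no control over the size or location of that exceptional set. In particular, nothing a priori forbids all three of $a_1,a_2,a_3$ from landing in the exceptional set of the sequence attached to some $(e_1,e_2,e_3)$ with $e_1, e_2$ not both zero; the coprimality hypothesis does not interact with the exceptional set at all, only with the moduli of the progressions. Turning your plan into a proof would require a quantitative or structural refinement of Skolem--Mahler--Lech specific to this third-order recurrence --- which is precisely the open problem. So the proposal is a reasonable framework, with a correct and complete subcase, but it does not prove the conjecture, and you were right to flag this.
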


This generalizes \cite[Conjecture 2.10]{CKW} since the five possibilities for points on (SI) do
not occur if $0 \in \mathcal{A}_2 \cap \mathcal{A}_3$. We show some new cases of the conjecture
using computational tools.

\begin{theorem}\label{thm:points at infinity}
Conjecture \ref{conj.: roots of unity} holds for all $a_1<a_2<a_3$
with $a_1+a_2+a_3 \leq 300$. 
\end{theorem}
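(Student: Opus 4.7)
The plan is an exhaustive computer-assisted verification. First, I would enumerate all triples $(a_1, a_2, a_3)$ with $a_1 < a_2 < a_3$, $\gcd(a_1, a_2, a_3) = 1$, and $a_1 + a_2 + a_3 \leq 300$, a finite and manageable list generated by a simple loop.

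For each triple, the task is to certify that the support of $(\mathrm{SI}) \subset \PP^2$ is contained in the five-point set
\[
S \,=\, \bigl\{(1:-1:0),\,(1:0:-1),\,(0:1:-1),\,(1:\zeta:\zeta^2),\,(1:\zeta^2:\zeta)\bigr\}.
\]
I would proceed chart by chart. In the chart $x_3 = 1$, form the bivariate polynomials $f_j = x_1^{a_j} + x_2^{a_j} + 1$ and compute the resultant $R(x_2) = \mathrm{Res}_{x_1}(f_1, f_2) \in \CC[x_2]$. Its complex roots are exactly the $x_2$-coordinates of the common affine zeros of $f_1$ and $f_2$. Each such $\beta$ yields finitely many candidate $x_1$-values (the roots of $f_1(x_1,\beta)$), and I test each by evaluating $f_3$. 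Any surviving pair must define a point of $S$, which is a trivial final check. Repeating in the charts $x_1 = 1$ and $x_2 = 1$, together with dismissing the three coordinate points (where the power sums clearly do not all vanish), completes the verification. An alternative route is to compute a Gr\"obner basis of $\langle \phi_1, \phi_2, \phi_3 \rangle$ saturated at $x_1 x_2 x_3$ and inspect the resulting zero-dimensional scheme directly.

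The main obstacle is computational cost. With $a_3$ approaching $300$, the resultant $R(x_2)$ has degree up to $a_1 a_2$, potentially in the thousands, and its factorization or root isolation over $\CC$ is the expensive step. To mitigate this I would exploit several reductions. First, the $S_3$-symmetry of the problem implies that one chart suffices up to permutation of variables. Second, the algebraic arguments of Conca, Krattenthaler and Watanabe \cite[Theorem 2.11]{CKW}, together with the $\{1,2\} \subseteq \mathcal{A}$ case handled via Newton's identities, dispose of large subfamilies on paper and prune the computer list substantially. Third, Lemma \ref{lem:somesolu} predicts a priori exactly which subset of $S$ the verification must recover, providing a strong independent cross-check. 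Finally, reductions modulo several primes $p$ coprime to $6\,a_1 a_2 a_3$ serve as rapid filters: any candidate $\beta$ surviving modulo $p$ corresponds to only a small set of exact complex roots, which can then be certified directly.
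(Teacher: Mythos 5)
Your approach is correct in spirit and overlaps substantially with the paper's: verify each triple computationally, work in an affine chart (using the $S_3$-symmetry to reduce to a single chart, as you note), and rely on Gr\"obner bases in a computer algebra system. The paper likewise restricts (SI) to the chart $\{x_1\neq 0\}$, giving the system $x^{a_j}+y^{a_j}+1=0$ for $j=1,2,3$, and runs {\tt magma} over all triples with $a_1+a_2+a_3\le 300$.

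The one genuinely different — and cleaner — step in the paper is how the output of the computation is interpreted. You propose to explicitly locate the common zeros (via resultants, root isolation over $\CC$, evaluating $f_3$ at the candidates) and then certify that each surviving point lies in the five-element set $S$. That works, but certifying exact algebraic points is the delicate part. The paper sidesteps identification entirely: Lemma~\ref{lem:somesolu} already proves, by hand, exactly which points of $S$ \emph{must} lie in (SI), depending on $\mathcal{A}_2$ and $\mathcal{A}_3$ (none, two, or four points of $S$, since $(0:1:-1)$ never appears in the chart $\{x_1\neq 0\}$). Hence the only thing the machine needs to verify is the \emph{number} of distinct solutions to the trivariate system in that chart — $0$, $2$, or $4$ according to the case. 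If the count matches the number of guaranteed points, there is no room for extraneous solutions. This turns a certification-over-$\CC$ problem into a degree count of a radical ideal, which a Gr\"obner engine reports robustly. You mention Lemma~\ref{lem:somesolu} only as a ``cross-check''; promoting it to the primary logical reduction is what makes the paper's proof short and certifiable. Separately, note that the mod-$p$ reductions you suggest can only serve as heuristic filters, not as part of the proof — a non-vanishing resultant mod $p$ gives no bound over $\CC$, and a vanishing one gives only candidates, so you would still need an exact argument in characteristic zero.
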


\begin{proof}
Let $P'=(\alpha:\beta:\gamma)$ be a point on (SI), corresponding to $P=(\alpha:\beta:\gamma:0)$~on~(HS). 
After permuting coordinates, we may assume
$\alpha\neq0$. 
Then $P'$ is in the affine chart $\CC^2$ of $\PP^2$ given by
$x = x_2/x_1$ and $y = x_3/x_1$.
The restriction of (SI) to that plane $\CC^2$ is defined by
  \begin{equation}\label{eq: intersection in A2}
    x^{a_1}+y^{a_1}+1 \,
  =\,x^{a_2}+y^{a_2}+1 
\,= \, x^{a_3}+y^{a_3}+1\,\,=\,\,0.\end{equation} 
Conjecture \ref{conj.: roots of unity}  
states that the number of solutions to the system (\ref{eq: intersection in A2})
is  $0$, $2$ or $4$, as follows:
\begin{center}
    \begin{tabular}{c|c|c}
    $\mathcal{A}_2$ and $\mathcal{A}_3$     &  Number of solutions to (\ref{eq: intersection in A2})
     & Possibilities for $P'$ in Conjecture \ref{conj.: roots of unity}
     \\
 \hline
    $0\in\mathcal{A}_2,0\in\mathcal{A}_3$  &  0 & --\\

    $0\notin\mathcal{A}_2,0\in\mathcal{A}_3$     &  2 & $(1:-1:0)$, $(1:0:-1)$\\

    $0\in\mathcal{A}_2,0\not\in\mathcal{A}_3$    &  2 &  $(1:\zeta:\zeta^2)$, $(1:\zeta^2:\zeta)$\\
    $0\not\in\mathcal{A}_2,0\not\in\mathcal{A}_3$   & 4 & $(1\!:\!-1\!:\!0)$, $(1\!:\!0\!:\!-1)$, 
    $(1\!:\!\zeta\!:\!\zeta^2)$, $(1\!:\!\zeta^2\!:\!\zeta)$\\
    \end{tabular}
\end{center}
We verified the counts in the second column for
 all $a_1<a_2<a_3$ with $a_1+a_2+a_3\leq300$.
We did this using the Gr\"obner basis implementation in the computer algebra system
{\tt magma}. The same would be doable with other tools for bivariate equations.
\end{proof}

\begin{conjecture}\label{conj: number of solutions}
For $n=3$ and ${\rm gcd}(a_1,a_2,a_3)=1$, the following holds for the system~(\ref{eq:system}): \\
If $0\in\mathcal{A}_3$, then we have
$\quad\quad\quad\quad\;
\#\mbox{Solutions }= \,
     \begin{cases}
       a_1a_2a_3 &\mbox{ if }\mathcal{A}_2=\{1,0\};\\
       a_1a_2a_3-3a_1a_2 &\mbox{ if }\mathcal{A}_2=\{1\}.\\ 
     \end{cases}
$ \\
If $\mathcal{A}_3=\{1\}$ or $\{2\}$, then we have
$\;
\#\mbox{Solutions }= \,
     \begin{cases}
       a_1a_2a_3-4a_1 &\mbox{ if }\mathcal{A}_2=\{1,0\};\\
       a_1a_2a_3-4a_1-3a_1a_2 &\mbox{ if }\mathcal{A}_2=\{1\}.\\ 
     \end{cases}
$ \\
If $\mathcal{A}_3=\{1,2\}$, then we have
$\,\,\;\;\;\;\;\;\;
\#\mbox{Solutions }= \,
     \begin{cases}
       a_1a_2a_3-2i_{\mathcal{A}} &\mbox{ if }\mathcal{A}_2=\{1,0\};\\
       a_1a_2a_3-2i_{\mathcal{A}}-3a_1a_2 &\mbox{ if }\mathcal{A}_2=\{1\}.\\ 
     \end{cases}
$ \\
Here $i_\mathcal{A}$ is the {\em index of nilpotency} of the zero-divisor $x_0$ in
the homogeneous system (HS).
\end{conjecture}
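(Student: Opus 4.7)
The plan is to apply the identity
$$\#\{\text{solutions in } \CC^3\} \;=\; a_1 a_2 a_3 \;-\; \mathrm{length}_{(\mathrm{SI})}(\mathrm{HS})$$
recorded just before Proposition \ref{prop:n=2}, so the problem reduces to computing the length of (HS) supported at the points of (SI). Conditional on Conjecture \ref{conj.: roots of unity}, the support of (SI) is contained in the five points $Q_1 = (1\!:\!-1\!:\!0)$, $Q_2 = (1\!:\!0\!:\!-1)$, $Q_3 = (0\!:\!1\!:\!-1)$, $R_1 = (1\!:\!\zeta\!:\!\zeta^2)$, $R_2 = (1\!:\!\zeta^2\!:\!\zeta)$, and by Lemma \ref{lem:somesolu} the $Q_i$ occur precisely when $0 \notin \mathcal{A}_2$ and the $R_j$ precisely when $0 \notin \mathcal{A}_3$. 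The six cases in the statement correspond to the four combinations of these two conditions, further split in the $R_j$ row by whether $\mathcal{A}_3$ is a singleton or $\{1,2\}$. The $S_3$-symmetry of the power sums in $x_1,x_2,x_3$ makes all three $Q_i$ carry equal length, and the transposition $x_2 \leftrightarrow x_3$ does the same for $R_1,R_2$, so the work collapses to two local computations.

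For the multiplicity at $Q_3 = (0\!:\!1\!:\!-1\!:\!0)$ in the case $\mathcal{A}_2 = \{1\}$, I would work in the chart $x_1 = 1$ with local coordinates $u = x_2 + 1$, $v = x_3$, $w = x_0$. Because all $a_j$ are odd, every generator has the shape $F_j = a_j u + \cdots + v^{a_j} - c_j w^{a_j}$, so $F_1 = 0$ is smooth at the origin and the implicit function theorem solves it for $u$ as a power series in $v,w$ with leading part $(c_1 w^{a_1} - v^{a_1})/a_1$. Substituting into $F_2,F_3$ and then forming the combination that cancels the common leading $L := c_1 w^{a_1} - v^{a_1}$ term reduces the length computation to an ideal in $\CC[[v,w]]$ whose two generators have initial forms proportional to $L$ and to $M := v^{a_2} - c_2 w^{a_2}$ (generically coprime). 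When $a_2 < 2 a_1$ this already gives local length $a_1 a_2$ by the standard tangent-cone bound; when $a_2 \geq 2 a_1$ one needs a Weierstrass factorisation of $F_2\vert_{V(F_1)}$ into $a_1$ smooth branches and a branch-by-branch vanishing-order count to reach the same total $a_1 a_2$. Summed over the three $Q_i$ this produces the $-3 a_1 a_2$ correction.

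For the multiplicity at $R_1 = (1\!:\!\zeta\!:\!\zeta^2\!:\!0)$, I would work in the chart $x_1 = 1$ with coordinates $v = x_2 - \zeta$, $w = x_3 - \zeta^2$, $t = x_0$. The linear part of $F_j$ at $R_1$ is $a_j(\zeta^{a_j-1} v + \zeta^{2(a_j - 1)} w)$ and depends only on $a_j \bmod 3$. If $\mathcal{A}_3$ is a singleton, all three linear forms are scalar multiples of a single non-zero form; eliminating it reduces the analysis to a 2-variable problem parallel to the $Q_i$ case, yielding local length $2 a_1$ per $R_j$. If $\mathcal{A}_3 = \{1,2\}$, the three linear parts span a 2-dimensional space, so after Weierstrass elimination of $v$ and $w$ the local ring collapses to a univariate quotient $\mathcal{O}_{\mathrm{HS}, R_j} \cong \CC[[t]]/(t^k)$ for some $k$. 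The conjecture in this case then amounts to showing $k = i_{\mathcal{A}}$, where $i_{\mathcal{A}}$ is the nilpotency index of $x_0$ in the global homogeneous coordinate ring of (HS).

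The main obstacle is this last identification: unlike the singleton case there is no linear symmetry to exploit, and the precise form of $\mathcal{O}_{\mathrm{HS}, R_j}$ must be extracted from the interplay of three power sums whose exponents are non-uniformly distributed modulo $3$. One must also verify that $R_1$ and $R_2$ carry equal multiplicity (plausibly via the complex conjugation swapping $\zeta$ and $\zeta^2$) and that $i_{\mathcal{A}}$ is independent of the generic $c_j$. A secondary obstacle is that the entire strategy rests on Conjecture \ref{conj.: roots of unity}: the present statement is therefore unconditional only for the exponent ranges covered by Theorem \ref{thm:points at infinity}, and a full proof would require either settling that conjecture or bounding (SI) directly.
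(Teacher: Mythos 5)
The statement you are attempting to prove is a \emph{conjecture}; the paper does not prove it. What the paper does is reduce it, in the proof of Theorem~\ref{thm:multiplicities}, to three local multiplicity claims at the points $(1{:}-1{:}0{:}0)$ and $(1{:}\zeta{:}\zeta^2{:}0)$ and their $S_3$-orbits, and then verifies those claims by Gr\"obner-basis computations in \texttt{magma} over a finite range of exponents. Your proposal reproduces that same structural reduction exactly — Bézout minus the local lengths at (SI), the orbit counts $3$ and $2$, the dependence on Conjecture~\ref{conj.: roots of unity}, and the three sub-claims of multiplicities $a_1a_2$, $2a_1$, and $i_{\mathcal{A}}$. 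Where you go beyond the paper is in attempting a purely theoretical local computation of those multiplicities rather than a computer check.

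That theoretical attempt, however, is not yet a proof, and there are concrete issues. First, a slip: you set up the chart $x_1=1$ to study $Q_3=(0{:}1{:}-1{:}0)$, but $x_1$ vanishes there, so $Q_3$ is not in that chart; you mean $Q_1=(1{:}-1{:}0{:}0)$ (or should pass to a different chart). Second, and more substantively, for the $Q_i$ multiplicity you need the combination $a_3F_2|-a_2F_3|$ (after eliminating $u$ via $F_1$) to have leading form of degree exactly $a_2$, coprime to $L=c_1w^{a_1}-v^{a_1}$. This is plausible when $a_2<2a_1$, but you rightly note it breaks down otherwise and would require a genuine Weierstrass/branch analysis; you do not carry this out, and the claimed answer $a_1a_2$ for all $(a_1,a_2,a_3)$ with $\mathcal{A}_2=\{1\}$ remains unestablished. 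Third, for $\mathcal{A}_3=\{1,2\}$ you openly concede that identifying the local length at $R_j$ with the globally-defined $i_\mathcal{A}$ (which the paper defines via the ideal-quotient stabilization $(I{:}x_0^{i-1})\subsetneq(I{:}x_0^{i})=(I{:}x_0^{i+1})$, not literally a nilpotency index) is the main gap. So your plan is sound and matches the paper's own reduction, but as a proof it is incomplete in precisely the same places where the paper falls back on computation: the multiplicity sub-claims themselves.
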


At present we do not have a simple formula for the number $i_\mathcal{A}$ in all cases.
Computationally, it can be found from the homogeneous ideal $I = \langle f_1,f_2,f_3 \rangle$ that is generated~by $\,f_j=x_1^{a_j}+x_2^{a_j}+x_3^{a_j}-c_jx_0^{a_j}\,$ for $j=1,2,3$.
Using ideal quotients, the definition is as follows:
$$ (I: x_0^{i_\mathcal{A}-1}) \,\,\subsetneq \,\,(I:x_0^{i_\mathcal{A}}) \,\,= \,\,
(I:x_0^{i_\mathcal{A}+1}).
$$
From our computations it seems that $i_\mathcal{A}$ is always either $a_1$ or $a_2$ or $2a_1$.

\begin{theorem}\label{thm:multiplicities}
Conjecture  \ref{conj: number of solutions} holds for all $a_1<a_2<a_3$ with $a_3\leq20$ or $a_1+a_2+a_3\leq40$.
\end{theorem}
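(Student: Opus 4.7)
The proof is a direct computational verification. The range in question --- $a_3 \le 20$ or $a_1+a_2+a_3 \le 40$, with $\gcd(a_1,a_2,a_3)=1$ --- contains only on the order of a few thousand triples, and each can be checked individually by computer algebra.

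The plan is as follows. Enumerate all admissible triples $(a_1,a_2,a_3)$ and, for each, read off $\mathcal{A}_2$ and $\mathcal{A}_3$ to identify the relevant case of Conjecture \ref{conj: number of solutions}. Choose a generic measurement vector $c = \phi_{\mathcal{A},\CC}(z)$ for some random $z \in \ZZ^3$, so that $c$ automatically lies in the image of $\phi$. Form the ideal $I_c \subset \QQ[x_1,x_2,x_3]$, verify via a Gr\"obner basis that $I_c$ is zero-dimensional and radical (this step also certifies that the sampled $c$ was indeed generic), and read off its $\QQ$-dimension as the number of complex solutions. Compare this count to the conjectured formula. In the case $\mathcal{A}_3 = \{1,2\}$, additionally compute $i_\mathcal{A}$ from the homogeneous ideal $I = \langle f_1,f_2,f_3\rangle$ by iterating the ideal quotient chain $(I:x_0^k)$ until it stabilizes, as prescribed in the statement of the conjecture. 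This protocol is implementable in {\tt magma}, extending the one used in Theorem \ref{thm:points at infinity}.

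A structural observation sharpens the verification and clarifies the shape of the formula. By Theorem \ref{thm:points at infinity}, the scheme (SI) is supported on at most the five points $(1\!:\!-1\!:\!0),(1\!:\!0\!:\!-1),(0\!:\!1\!:\!-1),(1\!:\!\zeta\!:\!\zeta^2),(1\!:\!\zeta^2\!:\!\zeta)$ throughout the range in question. By B\'ezout applied to (HS), the affine solution count equals $a_1 a_2 a_3$ minus the sum of the local lengths of (HS) at those points. The conjectured subtractions $3a_1 a_2$, $4a_1$, and $2 i_\mathcal{A}$ are therefore interpretations of these local lengths: one contribution per "permutation" point at infinity, and one per cube-root-of-unity point at infinity, in the sub-cases dictated by $\mathcal{A}_2$ and $\mathcal{A}_3$. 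Verifying the global solution count is thus equivalent to verifying each individual local multiplicity, which reduces to a computation in a two-variable affine chart around each of the five explicit points.

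The main obstacle I expect is runtime for the largest triples, where generators have degree up to $20$ and a global Gr\"obner basis over $\QQ$ can be heavy. The local reformulation above is what makes these cases tractable, since each local multiplicity computation is independent of the size of the other generators and can be done in a small chart. A secondary but real pitfall is confirming that the numerical $c$ chosen in the direct approach is actually generic; this is handled by testing zero-dimensionality and radicality of $I_c$ before extracting degree information, and by re-sampling $z$ on failure.
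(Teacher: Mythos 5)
Your proposal is correct in substance and, in its ``structural observation'' paragraph, arrives at essentially the method used in the paper. The paper's proof is purely local: it relies on Theorem~\ref{thm:points at infinity} to restrict attention to the five explicit points of (SI), conjectures precise multiplicity values there --- namely $a_1a_2$ at $(1:-1:0:0)$ when $\mathcal{A}_2=\{1\}$, $i_\mathcal{A}$ at $(1:\zeta:\zeta^2:0)$ when $\mathcal{A}_3=\{1,2\}$, and $2a_1$ (with $i_\mathcal{A}=a_1$) when $|\mathcal{A}_3|=1$ --- and then verifies these individual local lengths in the affine chart $\{x_1=1\}$ by computing $\dim_\CC \mathcal{O}_{P,\CC^3}/I$ via Gr\"obner bases. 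This is stronger than verifying the aggregate count, since the B\'ezout subtraction then follows after multiplying by the $S_3$-orbit sizes $3$ and $2$. Your primary plan of sampling $c=\phi(z)$ for random $z\in\ZZ^3$ and computing the degree of $I_c$ globally is a valid alternative and would verify the same statement, but it buys less insight and, as you note, is heavier at the upper end of the range.

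One minor imprecision in your global route: checking that $I_c$ is zero-dimensional and radical does \emph{not} certify that $c$ is generic. There can be non-generic $c$ for which $I_c$ is still radical and zero-dimensional but with a lower solution count (points escaping to infinity); radicality and finiteness are necessary, not sufficient, for genericity. In practice this is a non-issue --- the bad locus has measure zero, and a mismatch with the conjectured formula would prompt a re-sample --- but the claim that the check ``certifies'' genericity should be softened to ``is a sanity check,'' or replaced by a symbolic computation with $c_1,c_2,c_3$ as indeterminates. Also, be aware that the sum of local multiplicities does not uniquely determine the individual summands, so ``equivalent to verifying each individual local multiplicity'' is slightly overstated; the implication you actually need runs from the local claims to the global count, which is the direction the paper uses.
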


\begin{proof}
Our approach is to compute the multiplicity in (HS) for each point that is known 
(by Theorem \ref{thm:points at infinity}) to  lie in (SI).
We conjecture that these multiplicities are as follows:
\begin{itemize}
    \item[](i) If  $\mathcal{A}_2 = \{1\}$, then the point $(1:-1:0:0)$ has multiplicity $a_1a_2$ in  (HS);
    \item[](ii) if $\mathcal{A}_3=\{1,2\}$, then the point $(1:\zeta:\zeta^2:0)$ 
        has multiplicity $i_{\mathcal{A}}$ in (HS);
    \item[](iii) if $|\mathcal{A}_3| = 1$, then  $    2 i_{\mathcal{A}} = 2 a_1$
        and this is the multiplicity of
      $(1:\zeta:\zeta^2:0)$ in (HS).
      \end{itemize}
These claims imply Conjecture \ref{conj: number of solutions},
by our previous analysis. Indeed, if $0 \in \mathcal{A}_2 \cap \mathcal{A}_3$,
then (SI) is empty and the number of solutions to 
(\ref{eq:system}) is the B\'ezout number $a_1 a_2 a_3$. Otherwise,
we need to subtract the multiplicities above, according to the various cases. Here the number in~(i) 
is multiplied by $3$ since the $S_3$-orbit of $(1:-1:0:0)$ has three points,
and the numbers in (ii) and (iii)  are multiplied by $2$ since 
the $S_3$-orbit of $(1:\zeta:\zeta^2:0)$ has two points.

For our computations, we fix $P\in\{(1:-1:0:0),(1:\zeta:\zeta^2:0)\}$,
we focus on the affine chart $\CC^3 = \{x_1 = 1\}$, and 
we consider the ideal $I = \langle f_1,f_2,f_3 \rangle$
in the local ring $\mathcal{O}_{P,\CC^3}$ . 
The quotient $V=\mathcal{O}_{P,\mathbb{A}^3}/I$ is a vector space over $\mathbb{C}$, and its dimension is the multiplicity of (HS) at $P$.  We computed this dimension for all values
of $a_1,a_2,a_3$ in the stated range, and we verified that (i), (ii) and (iii) are satisfied.
This was done using Gr\"obner bases in  {\tt magma}.
\end{proof}

Extending Conjecture \ref{conj: number of solutions}
to $n \geq 4$ seems out of reach at the moment, for two reasons. First of all, the conditions on $\mathcal{A}$ for (SI) to have no solutions are less simple. 
For $n=4$ with ${\rm gcd}(a_1,a_2,a_3,a_4)=1$, Conca, Krattenthaler and Watanabe 
 \cite[Conjecture 2.15]{CKW} state
 three conditions on $\mathcal{A}$ under which (SI) has no solutions.
  They show that all three conditions are necessary. We verified their conjecture using Gr\"obner bases in \texttt{magma} for $a_1+a_2+a_3+a_4 \leq 100$.  
Secondly, in the event that (SI) does have solutions, it is not at all obvious what these should be. In general, they are not given only by points whose coordinates are roots of unity, 
as was the case for $n=3$. This happens already for $n=4$ as the following example shows:

\begin{example}Set $n=4$ and $\mathcal{A} = \{2,4,9,10\}.$ The system (\ref{eq:system}) has $576$ solutions which is $144$ less than the Bézout number $720$. This is explained
by the scheme (SI) in $\PP^3$ which is defined by
 the ideal $\langle x_1^2+x_2^2+x_3^2+x_4^2,x_1^4+x_2^4+x_3^4+x_4^4,x_1^9+x_2^9+x_3^9+x_4^9,x_1^{10}+x_2^{10}+x_3^{10}+x_4^{10}\rangle$. 
 The minimal polynomial of each of the coordinates of the points in (SI) has degree $36$.
\end{example}

\section{Images}
\label{sec3}

We now study the images
of the power sum maps $\phi_{\mathcal{A},\CC}$,
$\phi_{\mathcal{A},\RR}$ and
$\phi_{\mathcal{A},\geq 0}$.
The recovery problem~(\ref{eq:system}) has a solution if and only if
the measurement vector $c$ lies in that image.
We know from Chevalley's Theorem \cite[Theorem 4.19]{MS}
that ${\rm im}(\phi_{\mathcal{A},\CC})$ is a constructible subset of $\CC^m$.
Over the real numbers, the
Tarski-Seidenberg Theorem \cite[Theorem 4.17]{MS}
tells us that ${\rm im}(\phi_{\mathcal{A},\RR})$ is a semialgebraic subset of $\RR^m$
and ${\rm im}(\phi_{\mathcal{A},\geq0})$ is a semialgebraic subset of~$\RR_{\geq 0}^m$.
It follows from Proposition \ref{lem: complete intersection} that, for each of these images, the dimension equals~${\rm min}(n,m)$.

We begin with the question whether the  images are closed.
We use the classical topology  on $\RR^m$ or  $\CC^m$.
This makes sense not just over $\RR$, but also over $\CC$, since the 
Zariski closure of any complex polynomial map  coincides with its
classical closure \cite[Corollary~4.20]{MS}.

\begin{proposition}
The constructible set $\,{\rm im}(\phi_{\mathcal{A},\CC})$ is generally not closed in $\CC^m$. The semialgebraic set $\,{\rm im}(\phi_{\mathcal{A},\RR})$ is closed in $\,\RR^m$ when $\,0\in\mathcal{A}_2$, but it is generally not closed
otherwise.
Finally, the semialgebraic set $\,{\rm im}(\phi_{\mathcal{A},\geq 0})$ is always closed in $\RR^m_{\geq 0}$.
 \end{proposition}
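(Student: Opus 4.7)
\medskip

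\noindent
\textbf{Proof plan.} I would treat the three claims in the reverse of the order in which they appear, since the same properness argument handles both the closedness assertions, while a single counterexample takes care of both non-closedness assertions.

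First I would prove (iii), the closedness of $\text{im}(\phi_{\mathcal{A},\geq 0})$. The key observation is that $\phi_{\mathcal{A},\geq 0}\colon \RR^n_{\geq 0}\to\RR^m_{\geq 0}$ is a \emph{proper} map. Indeed, for any fixed $a_j\in\mathcal{A}$ and any $x\in\RR^n_{\geq 0}$, nonnegativity of all terms gives $\phi_j(x)=\sum_{i=1}^n x_i^{a_j}\geq x_i^{a_j}$ for every $i$, so $\phi_j(x)\leq M$ forces $0\leq x_i\leq M^{1/a_j}$. Hence any sequence $x^{(k)}\in\RR^n_{\geq 0}$ with $\phi(x^{(k)})\to c$ is bounded, passes to a convergent subsequence $x^{(k_\ell)}\to x\in\RR^n_{\geq 0}$, and continuity of $\phi$ yields $\phi(x)=c$. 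Thus the image is closed.

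Next I would handle the closedness half of (ii). If $0\in\mathcal{A}_2$, pick an even $a_j\in\mathcal{A}$; then $x_i^{a_j}\geq 0$ for every real $x_i$, so the same inequality $\phi_j(x)\geq x_i^{a_j}$ holds on all of $\RR^n$, giving $|x_i|\leq M^{1/a_j}$ whenever $\phi_j(x)\leq M$. The identical sequential-compactness argument then shows $\text{im}(\phi_{\mathcal{A},\RR})$ is closed in $\RR^m$.

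For the remaining non-closedness claims in (i) and (ii), I would exhibit one explicit counterexample that works simultaneously over $\RR$ (with $\mathcal{A}$ consisting of only odd numbers, so $0\notin\mathcal{A}_2$) and over $\CC$. A clean choice is $n=m=2$ with $\mathcal{A}=\{1,3\}$. Using the factorization $x_1^3+x_2^3 = (x_1+x_2)(x_1^2-x_1x_2+x_2^2)$, any point $(c_1,c_2)$ in the image with $c_1=0$ must satisfy $c_2=0$, so the punctured line $\{(0,c_2):c_2\neq 0\}$ lies entirely outside $\text{im}(\phi_{\mathcal{A},\bullet})$. To see that these points nevertheless lie in the closure, I would parametrize $x_1=t$, $x_2=-t+\varepsilon$ and compute $\phi(x_1,x_2)=\bigl(\varepsilon,\;3t^2\varepsilon-3t\varepsilon^2+\varepsilon^3\bigr)$; setting $\varepsilon=c_2/(3t^2)$ and letting $t\to\infty$ drives the image point to $(0,c_2)$. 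This construction works verbatim in $\RR$ and in $\CC$, yielding the failures of closedness in both (i) and the "otherwise" case of (ii).

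The only real care needed is in the limit computation for the counterexample, which I flagged above; the conceptual content of the proposition is the properness bound $\phi_j(x)\geq x_i^{a_j}$, which is available precisely when every $x_i^{a_j}$ is guaranteed to be nonnegative, i.e., either because the domain is the nonnegative orthant or because $a_j$ is even.
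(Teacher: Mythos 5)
Your proposal is correct, and the two closedness arguments (properness via the bound $\phi_j(x)\geq x_i^{a_j}$, then sequential compactness) are essentially identical to the paper's. The one place you genuinely depart is the non-closedness part: the paper uses the same counterexample $n=m=2$, $\mathcal{A}=\{1,a_2\}$ with $a_2$ odd, but to place $(0,c_2)$ in the closure over $\CC$ it simply invokes dominance (Proposition~\ref{lem: complete intersection}(i), which gives Zariski closure $=\CC^2$), and then gives a separate ad hoc real argument (solve $\phi_1=\epsilon$ and use that an odd-degree real polynomial always has a root). Your explicit parametrization $x_1=t$, $x_2=-t+\epsilon$ with $\epsilon=c_2/(3t^2)$ handles $\RR$ and $\CC$ in one stroke and is more self-contained, at the cost of a small computation; the paper's route is shorter for $\CC$ because it recycles an earlier result. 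Both are valid, and the difference is one of taste rather than substance.
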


\begin{proof}
Let $m=n=2$, $a_1 = 1$ and  $a_2 \geq 3 $ odd. Fix any $c=(c_1,c_2) \in \CC^2$, with $c_1 = 0$ and~$c_2 \not= 0$. Then (\ref{eq:system}) has no complex solution because
$\phi_1(x)$ divides $\phi_2(x)$. Thus, $c$ does not belong to the image of $\phi_{\mathcal{A},\CC}$ or $\phi_{\mathcal{A},\RR}$. However, $c$ is in the closure of 
$\,{\rm im}(\phi_{\mathcal{A},\CC})$ because that closure is $\CC^2$ by 
Proposition \ref{lem: complete intersection} (i). The same counterexample works over the real numbers.
Let us now set $c_1 = \epsilon$ for $\epsilon > 0$ very small 
and solve $\phi_1(x) = \epsilon$
by setting $x_2 = \epsilon - x_1$. This substitution in
$\phi_2(x) = c_2$ gives a polynomial equation in one variable $x_1$ of odd degree $a_2$.
Such an equation always has a real solution $x_1(\epsilon)$.
The image of the point $(x_1(\epsilon),  \epsilon - x_1(\epsilon))$
converges to $c$ in $\RR^2$ as $\epsilon \rightarrow 0$ from which we conclude that $c$ lies in the closure of ${\rm im}(\phi_{\mathcal{A},\RR})$. 

We are left with the cases where the image is closed. First suppose that $a_i \in \mathcal{A}$ is~even. Let $c \in \RR^m$ be  in the closure of $\,{\rm im}(\phi_{\mathcal{A},\RR})$.
There exists a sequence $\{x^{(\ell)} \}_{\ell \geq 0}$ of points in 
$\RR^n$ such that $\phi_{\mathcal{A},\RR}(x^{(\ell)})$ converges to $c$ as $\ell \rightarrow \infty$.
Since $a_i$ is even, we have
$\phi_i(x^{(\ell)})= || x^{(\ell)} ||_{a_i}^{a_i} $, which converges to $c_i \geq 0$ as $\ell \rightarrow \infty$.
Hence, the sequence $\{x^{(\ell)} \}_{\ell \geq 0}$  is bounded in the norm
$|| \cdot ||_{a_i}$. There is a subsequence that converges
to some point $x^{(\infty)}$ in~$\RR^n$. Since the power sum map $\phi_{\mathcal{A},\RR}$ is
continuous, the image of $x^{(\infty)} $ is equal to $c$.
Therefore,  $c \in {\rm im}(\phi_{\mathcal{A},\RR})$, and we conclude that the real image is closed. Finally, take $\mathcal{A}$ arbitrary and consider the nonnegative power map $\phi_{\mathcal{A},\geq 0}$. Let $c \in \RR^m_{\geq 0}$ be in the closure of $\,{\rm im}(\phi_{\mathcal{A},\geq 0})$ and let $\{x^{(\ell)} \}_{\ell \geq 0}$ be a sequence of nonnegative points
whose images $\phi_{\mathcal{A},\geq 0}(x^{(\ell)})$ converge to $c$ as $\ell \rightarrow \infty$.
Now, for any index $i$, the norm $ || x^{(\ell)} ||_{a_i} = \phi_i( x^{(\ell)})^{1/a_i}$ is bounded,
so there exists a convergent subsequence of $\{x^{(\ell)}\}_{\ell\geq0}$. Let $x^{(\infty)} \in\RR^n_{\geq 0}$ be its limit. Again, by the continuity of the power map, we have $\phi_{\mathcal{A},\geq 0}(x^{(\infty)})=c$. From this we conclude that $\,{\rm im}(\phi_{\mathcal{A},\geq 0})$ is closed.
\end{proof}

\begin{example}
Set $n=m=2$ and $\mathcal{A} = \{1,3\}$.
The image of $\phi_{\mathcal{A},\RR}$ is the non-closed set
$$ \{ (0,0) \} \,\,\cup \,\, \bigl\{ \,c \in\RR^2\,:\,
(c_1 < 0 \,\,{\rm and} \,\, c_1^3 \geq 4c_2 ) \,\,\,{\rm or} \,\,\,
(c_1 > 0 \,\,{\rm and} \,\,  c_1^3 \leq 4c_2 ) \, \bigr\}. $$
On the other hand, the image of  the map restricted to the nonnegative orthant is closed:
\begin{equation}
\label{eq:13posimage}
\,{\rm im}(\phi_{\mathcal{A},\geq 0}) \,\, = \,\,
\bigl\{ \,c \in\RR^2_{\geq 0} \,:\, c_2 \leq c_1^3 \leq 4 c_2\, \bigr\}. 
\end{equation} In Section \ref{sec: recovery from norms} we generalize this description of the image of  $\phi_{\mathcal{A},\geq0}$ to other power sum maps.
\end{example}

We next examine our images through the lens of algebraic geometry.
Let $c_1,\ldots,c_m$ be variables with 
${\rm degree}(c_i) = a_i$. These are coordinates on the 
weighted projective space $\WP^{m-1}$ with weights
given by $\mathcal{A}$. We regard $\phi=\phi_{\mathcal{A},\CC}$ as a
 rational map from $\PP^{n-1}$ to $\WP^{m-1}$.
 The following features of the image will be characterized in
Theorem \ref{thm:weighted}: (i) For $m=n+1$, the closure of 
the image ${\rm im}(\phi)$ is an irreducible
hypersurface in $ \WP^{m-1}$. We give a formula for its degree, which is the weighted degree of its defining polynomial in 
the unknowns $c_1,\ldots,c_m$. (ii) For $m \leq n$, we describe the  \textsl{positive branch locus} of the map $\phi$. This is a hypersurface in $\CC^m$. By reasoning as in the proof of \cite[Theorem 3.13]{KKK}, this 
hypersurface represents the algebraic boundary of the image of $\phi_{\mathcal{A},\geq0}$.

To study the branch locus of $\phi$, we start with the ramification locus $\mathcal{R}$.
This consists of points in $\CC^n$ where $\phi$ is not smooth \cite[\wasyparagraph 2.2, Proposition 8]{BLR90}. Set $\phi_j=\sum_{i=1}^nx_i^{a_j}$  and
 $\mu=\min\{n,m\}$.
Let $I\subset\CC[x_1,\ldots,x_n]$ be the ideal generated by the
$\mu \times \mu$ minors of the Jacobian matrix $\mathcal{J}$ as in (\ref{eq:jacobian}). 
Then $\mathcal{R} = V(I)$ is the set of points where $\mathcal{J}$ has rank less than~$ \mu$.
  Each maximal minor of $\mathcal{J}$, up to multiplication with a positive integer, 
  has the form 
\begin{equation}\label{eq:maximal minors}
\left(x_{i_1} x_{i_2}\cdots x_{i_{\mu}}\right)^{a_{i_1}-1} \cdot \!\!
\prod_{1\leq j<k\leq \mu} \!\!\! \!(x_{i_j}-x_{i_k})\,\cdot\,  S(x),\end{equation} for some 
$1 {\leq} i_1{<}\cdots {<} i_{\mu} {\leq}                  n$ and $i\in\{1,\ldots,m{-}\mu{+}1\}$.
The last factor  is a Schur~polynomial. 

In the square case $m=n$, the variety $\mathcal{R}$ is a reducible hypersurface in $\CC^n$, given by the vanishing of one polynomial (\ref{eq:maximal minors}). Write $g=\mbox{gcd}(a_1-1,\ldots,a_m-1)$.
 By \cite[Theorem 3.1]{DZ09}, the Schur polynomial $S$ is either constant, which happens  when $(a_i-1)/g=i-1$ for $1\leq i\leq n$, or it is irreducible. Let $\mathcal{R}'$ be the 
 closure in $\CC^n$ of  $\mathcal{R}\setminus \left(\cup_{i\neq j}V(x_i-x_j)\cup V(x_i)\right)$. 
 Thus, $\mathcal{R}'$ is the non-trivial component in the ramification locus.
 Our discussion implies the following:

\begin{proposition}
Assume $m=n$. The ramification variety $\mathcal{R}'$ is either empty, in which case we have $(a_i-1)/g=i-1$ for $1\leq i\leq n$, or it is an irreducible hypersurface of degree  $$\sum_{i=1}^n(a_i-1)-\binom{n}{2}-n(a_1-1).$$
\end{proposition}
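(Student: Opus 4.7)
The plan is to read the ramification locus off from a single polynomial, namely the Jacobian determinant. In the square case $m=n$, the $\mu \times \mu$ minors of $\mathcal{J}$ reduce to the unique $n \times n$ determinant $\det(\mathcal{J})$, so the defining ideal $I$ of $\mathcal{R}$ is principal and $\mathcal{R} = V(\det(\mathcal{J}))$. Specializing the factorization (\ref{eq:maximal minors}) to this top minor gives, up to a positive integer scalar,
\[
\det(\mathcal{J}) \;=\; (x_1 x_2 \cdots x_n)^{a_1-1} \cdot \!\!\prod_{1 \leq i < j \leq n}\!\!(x_i - x_j) \cdot S(x_1,\ldots,x_n),
\]
where $S$ is the Schur polynomial introduced in that formula.

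The factor $(x_1 \cdots x_n)^{a_1-1}$ vanishes precisely on $\bigcup_i V(x_i)$, and the Vandermonde vanishes precisely on $\bigcup_{i \neq j} V(x_i - x_j)$. These are exactly the trivial components removed in the definition of $\mathcal{R}'$, so $\mathcal{R}' = V(S)$. Next I would invoke \cite[Theorem 3.1]{DZ09}, as already advertised in the paragraph above the proposition: the Schur polynomial $S$ is either a nonzero constant, which occurs precisely when $(a_i-1)/g = i - 1$ for every $i$, or it is an irreducible element of $\CC[x_1,\ldots,x_n]$. In the first case $\mathcal{R}' = \emptyset$; in the second, $V(S)$ is an irreducible hypersurface in $\CC^n$, yielding the dichotomy asserted by the proposition.

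For the degree formula I would argue additively from the factorization above. Column $j$ of $\mathcal{J}$ has entries $a_j x_i^{a_j-1}$, so $\det(\mathcal{J})$ is homogeneous of degree $\sum_{j=1}^n(a_j-1)$. The monomial factor $(x_1 \cdots x_n)^{a_1-1}$ contributes degree $n(a_1-1)$ and the Vandermonde contributes degree $\binom{n}{2}$. Subtracting these from the degree of the determinant gives
\[
\deg S \;=\; \sum_{i=1}^n(a_i-1) \,-\, \binom{n}{2} \,-\, n(a_1-1),
\]
which is the claimed degree of $\mathcal{R}'$ in the non-empty case.

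The main obstacle is the appeal to \cite[Theorem 3.1]{DZ09}. Pinning down the partition $\lambda$ that indexes $S$ and translating the Dvornicich--Zannier irreducibility criterion into the clean arithmetic condition $(a_i-1)/g = i - 1$ is where the delicate work lies; once that is granted, the remainder is the purely degree-theoretic bookkeeping outlined above, together with the elementary observation that the monomial and Vandermonde factors precisely carve out the components stripped from $\mathcal{R}$ to form $\mathcal{R}'$.
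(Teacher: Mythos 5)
Your argument matches the paper's own reasoning, which is not presented as a formal proof but as an immediate consequence of the surrounding discussion: specialize the factorization (\ref{eq:maximal minors}) to the single $n\times n$ minor, strip off the monomial and Vandermonde factors (which are exactly the loci removed in the definition of $\mathcal{R}'$), invoke \cite[Theorem 3.1]{DZ09} for the constant-versus-irreducible dichotomy of $S$, and read off $\deg S$ by subtracting $n(a_1-1)$ and $\binom{n}{2}$ from $\sum(a_i-1)=\deg\det\mathcal{J}$. You also correctly identify the appeal to Dvornicich--Zannier as the one place where genuine content is being imported rather than rederived.
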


\begin{example}
For $\mathcal{A}=\{3,6,7\}$ and $n=3$, the ideal $I$ is principal. Its generator factors as
  $$x_1^2 x_2^2 x_3^2 (x_1-x_2)(x_1-x_3)(x_2-x_3)(x_1^2x_2^2+x_1^2x_2x_3
+x_1x_2^2x_3+x_1^2x_3^2+x_1x_2x_3^2+x_2^2x_3^2).$$
The variety $\mathcal{R}'$ is the quartic surface
in $\CC^3$ defined by the Schur polynomial in the last factor.
\end{example}

In the overdetermined regime ($m>n$), we remove only
the hyperplanes $\{x_i = x_j\}$.
 Let $\mathcal{R}''$ denote the closure 
of $\mathcal{R}\setminus \left(\cup_{i\neq j}V(x_i-x_j)\right)$ in $\CC^n$.
The ramification part $\mathcal{R}''$ was studied by
Fr\"oberg and Shapiro \cite{FS16}, but they reached only partial results. Notably,
it remains an open problem to find the dimension of $\mathcal{R}''$.
Assuming $a_1=g=1$,  the first interesting case is $n=3$, $m=5$.
This was studied in \cite{FS16}.
Proving the dimension of $\mathcal{R}''$ to be the expected one is equivalent to showing that three complete homogeneous polynomials form a regular sequence. This brings us
back to  Conca, Krattenthaler and Watanabe \cite[Conjecture~2.17]{CKW}.

\smallskip

Now assume $m\leq n$. Then $\mathcal{R}$ contains all linear spaces
defined by $n-m+1$ independent equations of the form
$x_i = x_j$ or $x_k = 0$. We call these {\em positive ramification components}.
This name is justified as follows. The Schur polynomial
 $S$ in (\ref{eq:maximal minors}) has
 positive coefficients, and therefore cannot vanish at nonzero points
in the nonnegative orthant $\RR^n_{\geq 0}$.
Hence, only these components contribute to the ramification 
locus of the positive map $\phi_{{\mathcal A}, \geq 0}$.
A {\em positive~branch hypersurface} is any irreducible hypersurface
in weighted projective space in $\WP^{m-1}$  that is the
closure of the image of a positive ramification component under the
power sum map~$\phi$.

\begin{theorem} \label{thm:weighted}
The following hypersurfaces in $\WP^{m-1}$ are relevant for the image of our map.
\begin{itemize}
    \item[(i)] If $m=n+1$, then the image of $\phi$  
is~an irreducible hypersurface in $\WP^{m-1}$ whose weighted degree is at most
$(a_1 a_2 \cdots a_m)/(m-1)!$.  If this ratio is an integer, then 
this bound can be attained. Specifically,
if $m=3$ and $a_1 a_2 a_3$ is even, then it can be  attained. 
\item[(ii)] If $m \leq n$, then the weighted degree of any positive branch hypersurface of 
$\phi$ is at most the B\'ezout number $a_1 a_2 \cdots a_m$.
\end{itemize}
\end{theorem}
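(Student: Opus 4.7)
The plan is to apply the projection formula in the Chow ring of the weighted projective space $\WP^{m-1}$ with weights $a_1,\ldots,a_m$. The key identity I would use is that, under the normalization $[V(c_j)] = a_j H$ for the hyperplane class $H$, one has $H^{m-1} = 1/(a_1\cdots a_m)$; moreover, the pullback of $H$ under the rational map $\phi:\PP^{n-1} \dashrightarrow \WP^{m-1}$ equals the hyperplane class $H_{\PP}$ of $\PP^{n-1}$, since $\phi^*[V(c_j)] = [V(\phi_j)] = a_j H_{\PP}$.

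For part (i), I would first invoke the corollary following Proposition~\ref{lem: complete intersection} together with the $\CC^*$-equivariance of $\phi$ to see that the image is cut out by an irreducible weighted-homogeneous polynomial $F \in \CC[c_1,\ldots,c_m]$ of weighted degree $d$. Since $\dim\PP^{n-1} = n-1 = m-2 = \dim V(F)$, the map $\phi$ is generically finite onto $V(F)$ of some degree $\delta$, and because the coordinate permutations by $S_n$ act faithfully on a generic fiber, $\delta \geq n! = (m-1)!$. Applying the projection formula to the class $H^{m-2}$ then gives
\[
\delta\cdot d\cdot H^{m-1} \;=\; \phi_*[\PP^{n-1}]\cdot H^{m-2} \;=\; \int_{\PP^{n-1}} (\phi^* H)^{m-2} \;=\; 1,
\]
so $\delta\cdot d = a_1\cdots a_m$, and hence $d \leq a_1\cdots a_m/(m-1)!$, with equality iff $\delta = (m-1)!$. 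For the attainment claim when $m=3$ and $a_1 a_2 a_3$ is even (and $\gcd(\mathcal{A})=1$), some pair $(a_i,a_j)$ has even product, and Proposition~\ref{prop:n=2} provides the full B\'ezout count $a_i a_j$ for the associated square subsystem; the remaining coordinate $\phi_k$ generically distinguishes the $S_2$-orbits, yielding $\delta = 2$ and hence $d = a_1 a_2 a_3/2$. For part (ii), I would parameterize a positive ramification component $R$ of dimension $m-1$ by $(z_1,\ldots,z_{m-1})\in\CC^{m-1}$, so that the restriction takes the form $g_j(z) = \sum_\ell b_\ell z_\ell^{a_j}$ for nonnegative integer multiplicities $b_\ell$ summing to at most $n$; the induced rational map $\bar g:\PP^{m-2}\dashrightarrow\WP^{m-1}$ is generically finite of some degree $\delta' \geq 1$ onto the positive branch hypersurface of weighted degree $d'$, and the same projection-formula computation gives $\delta'\cdot d' = a_1\cdots a_m$, hence $d' \leq a_1\cdots a_m$.

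The hard part will be justifying the intersection-theoretic identities on the (possibly singular) weighted projective space $\WP^{m-1}$ used above---in particular the normalization $H^{m-1} = 1/(a_1\cdots a_m)$ and the projection formula applied to a merely rational map $\phi$---which is typically handled via the orbifold/Deligne--Mumford stack perspective or by passing to a smooth resolution of $\WP^{m-1}$, together with care in handling the indeterminacy locus of $\phi$.
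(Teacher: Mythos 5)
Your approach is genuinely different from the paper's. You work directly with the orbifold Chow ring of the weighted projective space $\WP^{m-1}$, using the normalization $H^{m-1} = 1/(a_1\cdots a_m)$ and the push-pull formula. The paper instead substitutes $z_j = c_j^{1/a_j}$, passes to the complete intersection $Z = V(\sum_i \tau_i x_i^{a_j} - z_j^{a_j}) \subset \PP^{2m-2}$ (of degree $a_1\cdots a_m$), and applies the Refined B\'ezout Theorem \cite[12.3]{Ful} to the linear projection $\pi: \PP^{2m-2} \dashrightarrow \PP^{m-1}$ onto the $z$-coordinates, which gives $\deg(\pi(Z)) \leq a_1\cdots a_m/\deg(\pi|_Z)$ with equality if and only if $\pi|_Z$ has no base locus. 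Both roads lead to the same numerics, but the paper's detour through ordinary projective space avoids the weighted/orbifold subtleties you flag at the end as ``the hard part,'' and makes the base-locus correction completely explicit.

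That flag marks a real gap, not just a technicality. As written, you derive the \emph{equality} $\delta\cdot d = a_1\cdots a_m$. But $\phi: \PP^{n-1} \dashrightarrow \WP^{m-1}$ has indeterminacy exactly on the system at infinity $V(\phi_1,\ldots,\phi_m) \subset \PP^{n-1}$, which is nonempty in many of the cases under consideration. After resolving indeterminacy, $\phi^*H$ acquires a negative correction from the exceptional divisors, so $\int (\phi^*H)^{m-2} \leq 1$ rather than $=1$; you only obtain $\delta\cdot d \leq a_1\cdots a_m$, with a deficiency governed by the base scheme. Fortunately the inequality is all you need for the two upper bounds, so parts (i) and (ii) survive. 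The consequence is for your attainment argument when $m=3$ and $a_1a_2a_3$ is even: what must be checked is that the base locus (equivalently the system at infinity $\{x_1^{a_j}+x_2^{a_j}=0, j=1,2,3\}$ in $\PP^1$) is \emph{empty}, which is what the parity analysis in Proposition~\ref{prop:n=2} actually supplies. Your invocation of the ``full B\'ezout count $a_ia_j$'' for a square subsystem is not the point, and your assertion that $\phi_k$ generically separates the $S_2$-orbits (i.e.\ $\delta = 2$ exactly) is precisely the $m=3$ instance of Conjecture~\ref{conj.:injectivity over CC}, which the paper leaves open and only uses as a lower bound $\delta \geq |{\rm Stab}(\tau)|$.
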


\begin{proof}
Let $m \leq n$. The restriction of $\phi$ to any
positive ramification component is a rational map from
$ \PP^{m-2} $ to $\WP^{m-1}$. After renaming the $x_i$ if needed, we can write its 
coordinates as 
 \begin{equation}\label{eq:restricted map}\sum_{i=1}^{m-1} \tau_i x_i^{a_j}
 \qquad \mbox{ for }j=1,\ldots,m,\end{equation}
  where $\tau_1,\ldots,\tau_{m-1}$ are positive integers. 
  Let $H_\tau$ denote the image of this map in $\WP^{m-1}$.
This also covers the case $m=n+1$ in (i) since $\phi$ has coordinates as in (\ref{eq:restricted map}) with $\tau_1 = \cdots =\tau_{m-1} = 1$. 
Hence, all hypersurfaces in (i) and (ii) have the form $H_\tau$.
  Our aim is to compute their degrees.
    
Fix positive integers $\tau_1,\ldots,\tau_{m-1}$ and set $z_j = c_j^{1/a_j}$.
Consider the projective space  $\PP^{2m-2}$ with coordinates
$x_1,\ldots,x_{m-1},z_1,\ldots,z_m$. Let $Z$ denote the variety in $\PP^{2m-2}$
defined by the homogeneous polynomials 
$\sum_{i=1}^{m-1} \tau_i x_i^{a_j} -  z_j^{a_j}$, for $j=1,\ldots,m$.
By the same reasoning as in Proposition \ref{lem: complete intersection}, this variety is irreducible and it is a complete intersection of degree $a_1 a_2 \cdots a_m$. 

We consider the image of $Z$ under the coordinate projection
\begin{equation}
\label{eq:map2m-2}
 \pi\colon\PP^{2m-2} \dashrightarrow \PP^{m-1},\,
(x_1:\cdots:x_{m-1}:z_1:\cdots:z_m) \mapsto (z_1:\cdots:z_m). 
\end{equation}
The closure of $\pi(Z)$ is essentially the hypersurface $H_\tau$ we care about,
but it lives in~$\PP^{m-1}$. Its degree in $\PP^{m-1}$
with coordinates $(z_1:\cdots:z_m)$ coincides with the degree
of $H_{\tau}$ in $\WP^{m-1}$ with coordinates $(c_1:\cdots:c_m)$.
Indeed, these two hypersurfaces have the same defining polynomial, up to the substitution
$c_j = z_j^{a_j}$. 
The Refined B\'{e}zout Theorem \cite[12.3]{Ful} implies  \begin{equation}\label{eq:degree of image}\deg(\pi(Z))\,\, \leq \,\,\frac{\deg(Z)}{\deg(\pi|_Z)}
\,\,= \,\,\frac{a_1 a_2 \cdots a_m}{\deg(\pi|_Z)},\end{equation} where equality holds if $\pi$ has no base locus.
This immediately proves (ii). 

We proceed with proving (i). The degree of $\pi|_Z$ is the size of its generic fiber.
This equals the size of the generic fiber of the map given by (\ref{eq:map2m-2}). Conjecture \ref{conj:injectivity2} states that size of the generic fiber of $\pi$ is the size of the stabilizer of $\tau=(\tau_1,\ldots,\tau_{m-1})$ in the symmetric group~$S_{m-1}$. In particular, it would follow that the generic fiber is a single point if and only if the $\tau_i$ are all distinct, and it consists of $(m-1)!$ points if and only if the $\tau_i$ are identical. 
We do not know yet whether this conjecture holds. 
But, in any case, the number $|{\rm Stab}(\tau)|$ furnishes a lower bound for the size of a generic fiber and thus for $\deg(\pi)$.

Since the hypersurface ${\rm im}(\phi)$ in (i) equals $H_{\tau}$ for $\tau_1=\cdots=\tau_{m-1}=1$, we conclude from (\ref{eq:degree of image}) that its weighted degree is at most $(a_1 a_2 \cdots a_m)/(m-1)!$. Equality can only hold when the base locus of $\pi$ on the variety
$V(\phi_j(x) -  z_j^{a_j}\,:\,j=1,\ldots,m)$ is empty. This happens precisely when the system at infinity (SI) is the empty set.
A necessary~condition for this to happen is that $(m-1)!$
divides the B\'ezout number $a_1 a_2 \cdots a_m$.
One checks that this
is also sufficient when $m=3$: we saw
in Proposition \ref{prop:n=2},
that (SI) is empty when $a_1 a_2$ is even.
\end{proof}

\begin{example}[$m=3$]
Suppose $ {\rm gcd}(a_1 ,a_2,a_3)=1$  and
$B = a_1a_2a_3$ is even. If $n=2$, then the
image of $\phi$ is a curve of expected degree $B/2$ in $\WP^2$.
If  $n\geq 3$, then every positive branch curve has expected degree 
$B/2$ or $B$.
For instance, if $n=4$, then the ramification component
$\{x_1= 0, x_3 = x_4\}$ should give a 
branch curve of degree $ B$,
while $\{x_1=x_2,x_3 =x_4\}$ should give a branch curve of degree~$B/2$.
We shall see pictures of such curves in the next section.
\end{example}

\begin{example}[$m=4$]
If $n=3$ and $6$ divides $B =a_1a_2 a_3 a_4$, then we expect the image of $\phi$ to have weighted degree $B/6$. This would follow from the conjectures in
  Sections \ref{sec:fibers} and \ref{sec: square systems}. 
The positive branch surfaces for $n \geq 4$ should have degrees
$B/6$, $B/2$ or $B$.
If $6$ does not divide $B$, then the weighted degrees of the image and branch
surfaces in $\WP^3$ are determined by the base loci. This
 takes us back to Conjecture~\ref{conj: number of solutions}.
To be very explicit, let $\mathcal{A} = \{2,5,7,8\}$  as in Example \ref{ex:regimes}.
Here $B/6 = 560/6 = 93.333...$. The image 
of our map $\phi: \PP^2 \dashrightarrow \WP^3$
is defined by
a homogeneous polynomial of weighted degree $90$ with $304$ terms, namely
\begin{small}  $$
9 c_1^{45}-1050 c_1^{41} c_4-3724 c_1^{40} c_2^2+22400 c_1^{39} c_2 c_3
-31000 c_1^{38} c_3^2+ \cdots
-1966899200 c_1 c_4^{11}+1258815488 c_2^2 c_4^{10}. $$ \end{small}
By contrast, consider $\mathcal{A} = \{2,5,7,9\}$. Now,
$B/6 = 105$ is an integer, and this equals the weighted degree of the image surface. Its 
defining polynomial has $388$ terms, and it looks~like 
\begin{small} $$
59049 c_1^{35} c_2^7-459270 c_1^{34} c_2^6 c_3-59049 c_1^{35} c_3^5+255150 c_1^{33} c_2^6 c_4+ \cdots  +6350400 c_2 c_3^4 c_4^8-324000 c_3^6 c_4^7.
$$ \end{small}
\end{example}

\section{Recovery from $p$-norms}\label{sec: recovery from norms}

Focusing on the positive region, we now
 investigate the properties of the map $\phi_{\mathcal{A},\geq0}$. 
 The key fact to be used throughout is that the power sum of degree $p$
 represents the $p$-norm:
 \begin{equation}\label{eq:equivalence power sums and norms}
 ||x||_p \,\, = \,\,\biggl(\,\sum_{i=1}^nx_i^{p}\,\biggr)^{\! 1/p}\;\;\;\;\;\mbox{ for all }\, x=(x_1,\ldots,x_n)\in\RR^n_{\geq0}. \end{equation} 
 Hence, our recovery problem for nonnegative vectors $x \in\RR^n_{\geq0}$ is equivalent
 to recovery of $x$ from values of the $p$-norms $||\cdot||_{p}$, where $p$
 runs over a prespecified set $\mathcal{A}$ of positive integers.
We are interested in existence and uniqueness of vectors
with given $p$-norms for $p \in \mathcal{A}$.

Let us begin with the basic identifiability question: How many different $p$-norms are needed to reconstruct a vector in $\RR^n_{\geq0}$ from their values up to permuting the $n$ coordinates?
Conjecture \ref{conj.:injectivity over CC} together with (\ref{eq:equivalence power sums and norms}) would imply that $n+1$ different norms suffice.
  On the other hand, it follows from Proposition \ref{lem: complete intersection} that at least $n$ different $p$-norms are necessary. But are these 
  $n$ measurements already sufficient? We start by showing that this is
indeed the case.

\begin{proposition}\label{prop: unique recovery of norms}
For $m=n$, recovery from $p$-norms is always unique.
Given any set $\mathcal{A}$ of $n$ positive integers,
the map $\phi_{\mathcal{A},\geq0} : \RR^n_{\geq 0} \rightarrow \RR^n_{\geq 0}$ is injective up to permuting coordinates.
\end{proposition}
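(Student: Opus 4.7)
My plan is to prove injectivity of $\phi := \phi_{\mathcal{A},\geq 0}$ on the sorted closed chamber $C := \{x \in \RR^n_{\geq 0} : x_1 \leq x_2 \leq \cdots \leq x_n\}$, a fundamental domain for the $S_n$-action on $\RR^n_{\geq 0}$. The approach combines (i) strict positivity of the Jacobian on the interior of $C$, (ii) properness of $\phi|_C$ from the previous proposition, and (iii) induction on $n$.

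On the interior $C^\circ = \{0 < x_1 < x_2 < \cdots < x_n\}$, the sole maximal minor of the Jacobian $\mathcal{J}$ is, by \eqref{eq:maximal minors}, a positive integer times $(x_1 \cdots x_n)^{a_1 - 1} \cdot V(x) \cdot S(x)$, where $V(x) = \prod_{i<j}(x_j - x_i)$ is the Vandermonde and $S(x)$ is a Schur polynomial with nonnegative integer coefficients. On $C^\circ$ all three factors are strictly positive: the monomial because $x_i > 0$, the Vandermonde because the $x_i$ are strictly increasing, and the Schur polynomial because it is a nonzero polynomial with nonnegative coefficients evaluated on the positive orthant. Hence $\phi|_{C^\circ}$ is an orientation-preserving local diffeomorphism. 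Combined with the properness of $\phi|_C$ established in the preceding proposition, the restriction $\phi|_{C^\circ} \colon C^\circ \to \phi(C^\circ)$ is a proper local diffeomorphism, hence a covering map of constant degree $d \geq 1$.

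I would then argue $d = 1$ by induction on $n$, which also handles the boundary strata. The base case $n = 1$ is trivial. For the inductive step, let $x, y \in C$ satisfy $\phi(x) = \phi(y)$ and examine the largest entries $x_n, y_n$. If $x_n = y_n$, subtracting $x_n^{a_j}$ from both sides of each power-sum identity reduces the problem to the sorted $(n-1)$-tuples $(x_1, \ldots, x_{n-1})$ and $(y_1, \ldots, y_{n-1})$ in $\RR^{n-1}_{\geq 0}$ having the same $n$ power sums; applying the inductive hypothesis with any $n - 1$ of these constraints yields $(x_1, \ldots, x_{n-1}) = (y_1, \ldots, y_{n-1})$, hence $x = y$.

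The main obstacle is the remaining case $x_n \neq y_n$, say $x_n > y_n$. Here I would pass to the signed measure $\mu = \sum \delta_{x_i} - \sum \delta_{y_i}$, whose support is $\{z_1 < \cdots < z_K\}$ with integer weights $c_\ell \neq 0$. The power-sum equalities together with the mass balance $\sum_\ell c_\ell = 0$ imply that the exponential sum $F(t) = \sum_\ell c_\ell z_\ell^t$ vanishes at $t = 0$ and at each $t = a_j$, giving $n+1$ real zeros. Abel summation yields
\[
F(t) \;=\; \sum_{k=1}^{K-1} F_k\,(z_k^t - z_{k+1}^t), \qquad F_k \;:=\; \sum_{\ell \leq k} c_\ell \;=\; |\{i : x_i \leq z_k\}| - |\{i : y_i \leq z_k\}|.
\]
Each $z_k^t - z_{k+1}^t < 0$ for $t > 0$, so if the sorted tuples $x, y$ are componentwise comparable the $F_k$ share a common sign and $F(t)$ is strictly of one sign on $(0, \infty)$, contradicting the zeros at the $a_j$. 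The genuine difficulty lies in the non-componentwise-comparable case: here the $F_k$ have mixed signs, and ruling out such configurations requires either an exchange/rearrangement argument to reduce to the comparable setting, or a finer Descartes-type count using the parity constraints on the number of real zeros of $F$.
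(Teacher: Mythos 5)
Your approach differs genuinely from the paper's and, as you yourself flag, contains a gap that you do not close.

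The paper's proof also uses induction on $n$ (to dispose of the case where both putative preimages have last coordinate zero), but its main argument restricts $\phi$ to the \emph{line segment} $L$ joining two distinct points $X_1, X_2$ in the chamber $\{x_1 > x_2 > \cdots > x_n \geq 0\}$. It examines the derivative $\mathcal{J}_\phi\cdot(X_2-X_1)$ of the one-variable function $\phi|_L$, argues that it is nowhere zero on $L\setminus\{X_1\}$ because the Schur and Vandermonde factors in (\ref{eq:maximal minors}) do not vanish along the segment, and concludes injectivity on $L$ from constancy of sign of a component of this derivative, referring to the equivalence of conditions $(inj)$ and $(jac)$ in \cite[Theorem~1.4]{MFRCSD} for the precise general principle. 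No covering-map or exponential-sum machinery is needed, and no case distinction on the relative configuration of $X_1$ and $X_2$ ever arises.

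Your proposal has two holes. First, properness of $\phi|_{C^\circ}:C^\circ\to\phi(C^\circ)$ does not follow from properness of $\phi|_C$: a sequence in $C^\circ$ whose images converge inside $\phi(C^\circ)$ may accumulate on $\partial C$, and ruling that out requires $\phi(\partial C)\cap\phi(C^\circ)=\emptyset$, which is essentially the injectivity you are trying to establish. Without properness of the restriction to the open chamber, the covering-map and constant-degree argument is unavailable. Second — and this is the gap you acknowledge — the inductive case $x_n\ne y_n$ with $x,y$ not componentwise comparable is simply not handled. Your Abel-summation/exponential-sum observation produces a contradiction only when the partial sums $F_k$ share a sign, i.e.\ under componentwise comparability, and you explicitly leave open the exchange or Descartes-type argument needed otherwise. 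That is exactly where the content of the proposition lies, so the proof is incomplete. The paper's segment argument avoids this difficulty altogether: the direction $X_2-X_1$ is fixed while one moves along $L$, and the sign control comes from positivity of the Schur factor on the whole affine segment inside the chamber.
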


\begin{proof}
Write $\phi=\phi_{\mathcal{A},\geq0}$. We proceed by induction on $n$. For $n=1$, the map $\phi$ is obviously injective as it is strictly increasing. Thus, we have unique recovery for $n=1$. Let us now prove the statement for arbitrary $n$. Our argument is based on the calculus fact that a differentiable function from a real interval to $\RR$  is injective if its derivative has constant sign. 

Consider the cone of decreasing vectors,
$C = \{x \in \RR^n: x_1 > x_2  > \cdots > x_n \geq 0 \}$. Let $X_1,X_2$ be two arbitrary distinct points from this cone. Our claim states that they map to two different points under the map $\phi$, i.e., that $\phi$ is injective on $C$. Let $L$ be the line segment from $X_1$ to $X_2$ and consider the restriction $\phi|_L:\RR\rightarrow\RR^n$ of $\phi$ to $L$ that is now a function in one variable. Its derivative is given by the product of the Jacobian matrix of $\phi$, which we denote by $\mathcal{J}_\phi$, evaluated at $L$, and the vector $(X_2-X_1)$. First notice that if $X_{1,n}=X_{2,n}=0$, then we are in the case where the induction hypothesis applies. Let us now w.l.o.g.~assume $X_{2,n}>0$. Then $\mathcal{J}_\phi$ is an $n\times n$ matrix whose determinant is of the form (\ref{eq:maximal minors}). 

The Schur polynomial $S$ does not vanish on $L\setminus\{X_1\}$, and neither do the linear factors. 
Hence, the coordinates of the vector
$\mathcal{J}_\phi\cdot(X_2-X_1)$ do not vanish at any point on $L\setminus\{X_1\}$.
Each coordinate is a function of constant sign on the whole segment $L$. This shows that $\phi$ is injective on the line $L$. As $X_1$ and $X_2$ were chosen to be arbitrary points, we conclude that $\phi$ is injective on
the whole cone $C$. For a much more
general version of this argument, we refer to the equivalence of conditions $(inj)$ and
$(jac)$ in \cite[Theorem 1.4]{MFRCSD}.
\end{proof}

Our next goal is to characterize the semialgebraic set $\,{\rm im}(\phi_{\mathcal{A},\geq0})$
inside the nonnegative orthant $\RR^m_{\geq 0}$.
Starting with  $m=2$, we first present a generalization
of the formula (\ref{eq:13posimage}).

\begin{proposition}\label{prop: image positive map} Set $ m = 2 \leq n$ and $\mathcal{A} = \{ a_1 < a_2 \}$.
Then  the nonnegative image equals
\begin{equation}
\label{eq:posimage}
\,{\rm im}(\phi_{\mathcal{A},\geq 0}) \,\, = \,\,
\bigl\{ \,c \in\RR^2_{\geq 0} \,\,:\,\, c_2^{a_1}\, \leq \,c_1^{a_2} \,\leq \,n^{a_2-a_1} c_2^{a_1}\, \bigr\}. 
\end{equation}
\end{proposition}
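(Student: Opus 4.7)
The plan is to pass through (\ref{eq:equivalence power sums and norms}) and rephrase the two inequalities in (\ref{eq:posimage}) as the classical comparison
\[
\|x\|_{a_2} \,\leq\, \|x\|_{a_1} \,\leq\, n^{1/a_1 - 1/a_2}\,\|x\|_{a_2}
\]
between $p$-norms on $\RR^n_{\geq 0}$. With this dictionary, the containment ``$\subseteq$'' is immediate: the left inequality is the nesting of $p$-norms (for $p \leq q$, $\|x\|_q \leq \|x\|_p$), and the right one is the power-mean inequality $M_{a_1}(x) \leq M_{a_2}(x)$ with $M_p(x) = (\tfrac{1}{n}\sum x_i^p)^{1/p}$.

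For the reverse containment, I would fix $c \in \RR^2_{\geq 0}$ satisfying both bounds. If $c_1 = 0$ then $c_2 = 0$ by either inequality and $x=0$ is a preimage, so assume $c_1 > 0$ and consider the level set
\[
S_{c_1} \,=\, \bigl\{\,x \in \RR^n_{\geq 0}\,:\,\textstyle\sum_{i=1}^n x_i^{a_1} = c_1\,\bigr\}.
\]
This set is compact, and it is path-connected, since the map $x \mapsto (x_1^{a_1},\ldots,x_n^{a_1})/c_1$ is a homeomorphism $S_{c_1} \to \Delta_{n-1}$ with continuous inverse. Hence the image of the continuous function $f(x) = \sum_{i=1}^n x_i^{a_2}$ on $S_{c_1}$ is a closed interval.

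The two endpoints of this interval are supplied by the same $p$-norm comparisons as above, now with $c_1$ held fixed: the maximum of $f$ on $S_{c_1}$ equals $c_1^{a_2/a_1}$, attained at the one-point-support vector $(c_1^{1/a_1}, 0, \ldots, 0)$, while the minimum equals $n^{1 - a_2/a_1}\,c_1^{a_2/a_1}$, attained at the equal-coordinate vector $((c_1/n)^{1/a_1}, \ldots, (c_1/n)^{1/a_1})$. The hypotheses on $c$ place $c_2$ in $[n^{1-a_2/a_1} c_1^{a_2/a_1},\,c_1^{a_2/a_1}]$, so the intermediate value theorem produces some $x \in S_{c_1}$ with $f(x) = c_2$. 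This $x$ is the desired preimage under $\phi_{\mathcal{A},\geq 0}$.

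I foresee no serious obstacle: the argument is bookkeeping around the power-mean inequalities together with path-connectedness of the level set $S_{c_1}$. The one small point to verify is that the two extrema of $f$ on $S_{c_1}$ are realized at the one-point-support and all-equal configurations, which is exactly what the two classical $p$-norm inequalities say once $c_1$ is fixed.
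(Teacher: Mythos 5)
Your proof is correct and takes essentially the same route as the paper's: both reduce the two inequalities to the classical $p$-norm comparisons $\|x\|_{a_2}\leq\|x\|_{a_1}\leq n^{1/a_1-1/a_2}\|x\|_{a_2}$ for the forward containment, and both obtain the reverse containment by noting that all intermediate values of the ratio are attained. The only difference is bookkeeping: the paper works with the ratio $\|x\|_{a_1}/\|x\|_{a_2}$ and tacitly rescales to hit any prescribed $(c_1,c_2)$, whereas you fix $c_1$, exhibit the level set $S_{c_1}$ as homeomorphic to $\Delta_{n-1}$, and apply the intermediate value theorem to $\sum_i x_i^{a_2}$ on that connected compact set --- which makes the ``all values attained'' step fully explicit where the paper merely asserts it.
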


\begin{proof}
At any point $x \in \RR^n_{\geq 0}$, our map evaluates the norms
$ ||x||_{a_j} = \phi_j(x)^{1/a_j}$ for $i=1,2$.
The first norm  is  larger than or equal to the second one: $||x||_{a_1} \geq ||x||_{a_2}$.
They agree at coordinate points. Their ratio is
maximal at $e=(1,1,\ldots,1)$. This gives the inequalities
$$ 1 \,\,\leq \,\,\frac{||x||_{a_1}}{||x||_{a_2}} \,\,\leq  \,\,
\frac{||e||_{a_1}}{||e||_{a_2}} \,=\, \frac{n^{1/a_1}}{n^{1/a_2}} . $$
All values in this range are obtained by some point $x \in \RR^{n}_{\geq 0}$.
We now raise both sides to the power $a_1 a_2$ and thereafter we clear
denominators. This gives the inequalities in (\ref{eq:posimage}).
\end{proof}

The proof of Proposition \ref{prop: image positive map} suggests that the study
of the nonnegative image  $\,{\rm im}(\phi_{\mathcal{A},\geq 0}) \,$
can be simplified by replacing the power sum map by the
normalized map into the simplex
\begin{equation}
\label{eq:psimap}
 \psi_{\mathcal{A}} \,:\,\RR^n_{\geq 0} \,\dashrightarrow \,\Delta_{m-1} \,:\,\,
x \,\mapsto\, \frac{1}{\sum_{j=1}^m || x ||_{a_j}} \cdot 
\bigl(\, ||x||_{a_1},||x||_{a_2}, \ldots,||x||_{a_m}  \bigr). 
\end{equation}
Here, $\Delta_{m-1} = \{u \in \RR^m_{\geq 0} \,: \, u_1 + u_2 + \cdots + u_m = 1 \}$
is the standard probability simplex. If we know the image of this map then
that of the power sum map can be recovered as follows:
\begin{equation}\label{eq:image norm map and positive image} \,{\rm im}(\phi_{\mathcal{A},\geq 0}) \,\, = \,\,
\bigl\{ \,c \in \RR_{\geq0}^m \,\,: \,\,\frac{1}{\sum_{j=1}^m c_j^{1/a_j}} 
\bigl(\,c_1^{1/a_1}, \,c_2^{1/a_2} , \ldots, \,c_m^{1/a_m}\, \bigr) \, \in \,{\rm im}(\psi_{\mathcal{A}} )\, \bigr\}. \end{equation}

We next consider the case $m=3$. For every $n \geq 3$, the image
is a nonconvex region in the triangle $\Delta_2$. These regions get larger
as $n$ increases. We illustrate this for an example.

\begin{figure}[h]
\begin{center}
\includegraphics[scale=0.30]{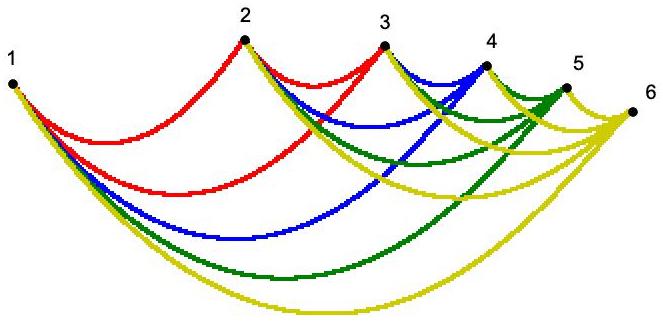} \qquad 
\includegraphics[scale=0.30]{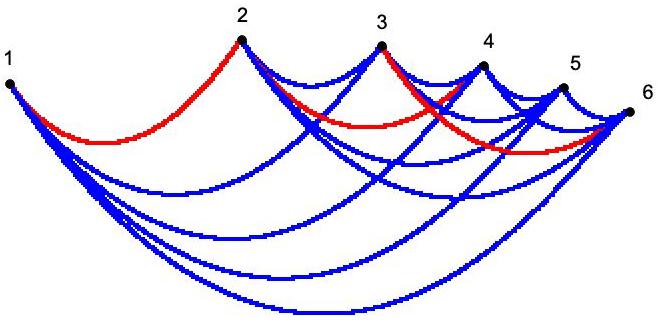}
\vspace{-0.1in}
\end{center}
\caption{The image of the norm map $\psi_\mathcal{A}$ 
for $n=6,m=3$ is a curvy hexagon in a triangle. The color coding on
the left shows the progression of images for $n=3,4,5,6$.
The color coding on the right
shows the algebraic degrees $12$ (red) and $24$ (blue) of
the curvy segments.	}
\label{fig:waves}	
\end{figure}

\begin{example} \label{ex:waves}
Set $\mathcal{A}=\{2,3,4\}$. For $n\geq 3$, the image of 
the norm map $\psi_\mathcal{A}$ into the triangle $\Delta_2$
is an $n$-gon with curvy boundary edges that lies inside the subtriangle $\{c_1>c_2>c_3\}$.
The edges and diagonals of this $n$-gon
are the following $\binom{n}{2}$ curvy segments  for $1 \leq i < j \leq n$:
$$ B_{ij} =  \psi_\mathcal{A} \bigl( \{ x \in \RR^n_{\geq 0} \,:\,
x_1 = \cdots = x_i
\geq x_{i+1} = \cdots = x_{j} \geq x_{j+1} = \cdots = x_n = 0 \}\bigr). $$
The Zariski closure of $B_{ij}$ is an irreducible curve. 
There are $\lfloor n^2/4 \rfloor$ distinct branch curves in total.
For $n=3$, there are two distinct branch curves:
one curve of degree $12$, given by the segment $B_{12}$,
and one of degree $24$, given by the two segments $B_{13}$ and $B_{23}$.
For $n=6$,
Figure \ref{fig:waves} shows the 
curvy hexagon ${\rm im}(\psi_\mathcal{A})$.
Its  $15$ curvy segments form nine distinct branch curves,
six of degree $24$ and three of degree $12$. The latter 
are given by $B_{12}, B_{24},B_{36}$. 
The curvy segment $B_{12}$ is red in both pictures.  For $n=2$, 
we have $B_{12} = {\rm im}(\psi_\mathcal{A})$.
For $n \geq 3$, the curvy segment $B_{12}$ is one of the $n$ boundary edges of 
${\rm im}(\psi_\mathcal{A})$. The Zariski closure of the 
curvy segment $B_{12}$ is the branch curve
$\{c_1^{12}-4c_1^6c_2^6-4c_2^{12}+12c_1^2c_2^6c_3^4-3c_1^4c_2^8-2c_3^{12}=0\}$.
\end{example}

We now state a theorem which generalizes our observations in Example  \ref{ex:waves}
to $m \geq 4$. We fix $n \geq m$ and
$\mathcal{A} = \{a_1 < \cdots < a_m\}$ as before.
For $1 \leq \ell \leq m$ and any ordered set $\nu=(\nu_1,\dots,\nu_l) \in \binom{[n]}{\ell}$,
let $R_\nu$ denote the set of vectors
$ x \in \RR^n_{\geq 0} $ that satisfy
$x_i = x_{i+1}$ if $i < \nu_1$ or $\nu_r \leq i < \nu_{r+1}$ for some~$r$, and $x_i=0$ for all $i>\nu_\ell$,
and $x_i \geq x_{i+1}$ otherwise.
Its image  $B_\nu = \psi_{\mathcal{A}}(R_\nu)$ is a 
semialgebraic subset of  dimension $\ell-1$ in $\Delta_{m-1}$.
Proposition \ref{prop: image positive map} tells us that
$B_\nu$ is a curvy simplex with vertices
$B_{\nu_1},\ldots,B_{\nu_\ell}$.
We define the {\em type} of $\nu$ to be the multiset
$\{\nu_1, \nu_2-\nu_1, \nu_3-\nu_2,\ldots,\nu_\ell- \nu_{\ell-1}\}$.
We can view $\tau = {\rm type}(\nu)$ as a partition 
with precisely $\ell$ parts of
an integer between $\ell$ and $n$.
Let $T_{n,\ell}$ denote the set of such partitions $\tau$.
We use the notation ${\rm Stab}(\tau)$ from
Conjecture \ref{conj:injectivity2}. 
In analogy to the proof of Theorem \ref{thm:weighted}, we denote by $H_{\tau}$ the image in the simplex $\Delta_{m-1}$ of a positive ramification component of type~$\tau$.

\begin{theorem} \label{thm:waves} Assume $m\leq n$. 
The norm map $\psi_{\mathcal{A}}$ in
(\ref{eq:psimap}) has the following properties:
\begin{itemize}
    \item[](i) The image of $\psi_{\mathcal{A}}$ in $\Delta_{m-1}$ is the union of 
the curvy $(m - 1)$-simplices $B_\nu$ where $\nu \in \binom{[n]}{m}$.
The curvy facets of these curvy simplices are 
$B_\mu$ where $\mu \in \binom{[n]}{m-1}$.
Some of these curvy $(m-2)$-simplices form the boundary  of the semialgebraic set $\,{\rm im}(\psi_{\mathcal{A}})$.
\item[](ii) Two curvy $( m-2)$-simplices $B_\mu$ and $B_{\mu'}$ have the
 same Zariski closure if $\,{\rm type}(\mu) = {\rm type}(\mu')$.
Thus, the irreducible branch hypersurfaces $H_\tau$ are indexed by $\,\tau \in T_{n,m-1}$.
\end{itemize}
 \end{theorem}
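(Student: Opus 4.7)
The plan for part (i) is to identify the curvy simplices $B_\nu$ with the images of the $m$-dimensional faces of the decreasing cone $D = \{y \in \RR^n : y_1 \geq \cdots \geq y_n \geq 0\}$. Since $\psi_\mathcal{A}$ is invariant under coordinate permutations, $\,{\rm im}(\psi_\mathcal{A}) = \psi_\mathcal{A}(D)$. The cone $D$ is simplicial with $n$ facet-defining inequalities ($y_i \geq y_{i+1}$ for $i<n$ and $y_n \geq 0$); its $m$-dimensional faces correspond bijectively to subsets $\nu \in \binom{[n]}{m}$, with $\nu$ recording the positions not tightened, and this identification recovers $R_\nu$ exactly. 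Consequently $\bigcup_{|\nu|=m} R_\nu$ equals the subset of $D$ of vectors having at most $m$ distinct positive values. The inclusion $\bigcup_{|\nu|=m} B_\nu \subseteq {\rm im}(\psi_\mathcal{A})$ is clear, so the crux of (i) is to show that for $x \in D$ with more than $m$ distinct positive values, the fiber $F_x := \psi_\mathcal{A}^{-1}(\psi_\mathcal{A}(x)) \cap D$ meets this $m$-skeleton.

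I would establish this reduction inside the compact slice $D_1 := D \cap \{\sum y_i = 1\}$, setting $F_x^1 := F_x \cap D_1$. By Proposition \ref{lem: complete intersection} and the scale-invariance of $\psi_\mathcal{A}$, the set $F_x^1$ is a nonempty compact semialgebraic set of dimension at least $n - m$, which is positive whenever $n > m$. One transports $x$ along a path in $F_x^1$ that monotonically shrinks some positive coordinate gap $y_i - y_{i+1}$ or the tail entry $y_n$, arguing that $F_x^1$ cannot be contained in the top-dimensional open stratum of $D_1$: its complement of the $m$-skeleton is a finite union of open strata of dimension strictly greater than $m-1$, and a positive-dimensional compact semialgebraic set cannot live entirely in such an open stratum as a closed subset of $D_1$. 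Iterating the descent eventually yields a point of the $m$-skeleton in $F_x^1$, completing (i). The facet statement follows from the simplicial structure of $R_\nu$: its codimension-one faces are the $R_{\nu \setminus \{\nu_i\}}$ with $\nu \setminus \{\nu_i\} \in \binom{[n]}{m-1}$, whose images are precisely the $B_\mu$ with $\mu \subset \nu$. The outer boundary of $\,{\rm im}(\psi_\mathcal{A})$ is then the union of those facets not shared between two top-dimensional cells.

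For part (ii), the restriction of $\psi_\mathcal{A}$ to $R_\mu$ for $\mu \in \binom{[n]}{m-1}$ is, up to normalization, the weighted power-sum map
\[
(y_1, \ldots, y_{m-1}) \longmapsto \left(\sum_{r=1}^{m-1} c_r y_r^{a_j}\right)_{j=1,\ldots,m},
\]
where $c_r = \mu_r - \mu_{r-1}$ (with $\mu_0 = 0$) is the size of the $r$-th block. If $\mu' \in \binom{[n]}{m-1}$ has the same type $\tau$ as $\mu$, the block-size multisets agree, so a permutation $\sigma \in S_{m-1}$ satisfies $c'_r = c_{\sigma(r)}$; relabeling $y_r \leftrightarrow y_{\sigma(r)}$ makes the two parametrizations coincide as set-valued maps on the decreasing cone, so $B_\mu$ and $B_{\mu'}$ have the same Zariski closure, which we define to be $H_\tau$. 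The irreducibility of $H_\tau$ follows because $R_\mu$ is an irreducible polyhedral cone of real dimension $m-1$ and the parametrization is polynomial. Thus the distinct irreducible branch hypersurfaces are indexed exactly by partitions $\tau \in T_{n, m-1}$.

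The main obstacle is the skeleton-meeting step in part (i): over the reals, a dimension count alone does not guarantee that $F_x^1$ hits the $m$-skeleton. A rigorous resolution would likely require a semialgebraic triangulation of $D_1$ compatible with the stratification by number of distinct positive values, or alternatively an atomic-representation theorem of Tchakaloff type, suitably adapted to the integer-multiplicity constraint built into the definition of $R_\nu$.
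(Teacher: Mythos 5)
Your treatment of part (ii) is essentially the paper's own argument: two cones $R_\mu, R_{\mu'}$ of the same type have the same linear span in $\RR^n$ after relabelling, hence the same image curve/hypersurface, and irreducibility follows from the polynomial parametrization of an irreducible polyhedral cone.

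For part (i) you have, usefully, isolated the actual content of the first sentence of the theorem, namely that \emph{every} point of ${\rm im}(\psi_\mathcal{A})$ is realized on the $m$-skeleton $\bigcup_{|\nu|=m}R_\nu$ of the decreasing cone. The paper's own proof is terse on exactly this point: it establishes that each $B_\nu$ is a curvy $(\ell-1)$-simplex (via Proposition~\ref{prop: unique recovery of norms}) and then asserts the boundary description, without spelling out why the full image is covered by the $(m-1)$-dimensional $B_\nu$. So you have spotted a real subtlety. However, your proposed justification does not close the gap, and you are right to flag it yourself: a positive-dimensional compact semialgebraic set certainly \emph{can} sit entirely inside a single open stratum of $D_1$ as a closed subset (a small round semialgebraic sphere does), so neither dimension counting nor compactness forces the fiber $F_x^1$ to touch a lower stratum. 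The Tchakaloff/Richter route you mention also does not apply directly, precisely because it produces atoms with arbitrary positive weights rather than the fixed integer block multiplicities built into the $R_\nu$.

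Here is a direct repair that uses the same structural ingredient as Proposition~\ref{prop: unique recovery of norms}, namely positivity of the minors of the Jacobian. Let $c'=\phi(x)$ and let $F=\phi_{\mathcal A,\geq 0}^{-1}(c')\cap D$; this is compact and nonempty. Maximize the coordinate $x_1$ over $F$ and let $x^*$ be a maximizer, lying in the relative interior of some $R_\mu$ with $k=|\mu|$ distinct values $y_1>\cdots>y_k$. If $y_k=0$ we are already on a lower stratum, so assume $y_k>0$. Writing $\phi_j=\sum_{r=1}^k w_r y_r^{a_j}$ on $R_\mu$ with block sizes $w_r$, the reduced Jacobian $\bigl(a_j w_r y_r^{a_j-1}\bigr)_{j,r}$ has full rank $\min(m,k)$ (its maximal minors have the form~(\ref{eq:maximal minors}) up to positive factors), so the Lagrange condition at $x^*$ yields scalars $\beta_1,\dots,\beta_m$ with $\sum_j\beta_j a_j w_r y_r^{a_j-1}=\delta_{r,1}$ for $r=1,\dots,k$. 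Thus the nonzero polynomial $p(t)=\sum_j\beta_j a_j t^{a_j-1}$, which has at most $m$ monomials, vanishes at the $k-1$ distinct positive numbers $y_2,\dots,y_k$. By Descartes' rule of signs this forces $k-1\le m-1$, i.e.\ $k\le m$, so $x^*$ lies in the $m$-skeleton and $\psi_\mathcal{A}(x)\in B_\nu$ for some $\nu$ with $|\nu|\le m$ (which can be padded to $|\nu|=m$ since $n\ge m$). This supplies the missing implication and, combined with the injectivity on each $R_\nu$ from Proposition~\ref{prop: unique recovery of norms}, gives a complete proof of part~(i).
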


\begin{proof}
 For $\ell\in\{1,\ldots,m\}$ and $\nu\in\binom{[n]}{\ell}$, the set $R_{\nu}$ is a convex polyhedral cone, spanned by linearly independent vectors in
 a linear subspace of dimension $\ell\leq m$ in $\RR^n$. 
  By Proposition~\ref{prop: unique recovery of norms}, the map $\phi_{\mathcal{A},\geq 0}$ is injective on~$R_{\nu}$. Therefore, by the transformation in (\ref{eq:image norm map and positive image}),
the map  $\psi_{\mathcal{A}}$ is injective on~$R_{\nu}$ up to scaling. This means that
 the image $B_{\nu} = \psi_\mathcal{A}(R_{\nu})$ is a
  curvy simplex of dimension $\ell - 1 $ inside the probability simplex $\Delta_{m-1}$. 
  We also conclude that the boundary of  ${\rm im}(\psi_\mathcal{A})$ equals
  the union of the images $B_{\nu}=\psi_{\mathcal{A}}(R_\nu)$, where $\nu$
   runs over a certain subset of $\binom{[n]}{m-1}$. These
   specify the algebraic boundary of ${\rm im }(\phi_{\mathcal{A},\geq 0})$. This proves~(i).
 
To see that part (ii) holds, we write  the restriction of $\phi_{\mathcal{A},\geq0}$ to
the cone $R_\mu$ as a polynomial function in only $\ell$ distinct variables $x_i$. The
$j$-th coordinate of that restriction has the form $\sum_{i=1}^\ell \tau_i\, x_i^{a_j}$,
where $\tau = {\rm type}(\mu)$. Different cones $R_\mu$ of the same
type $\tau$ are distinguished only by the orderings of the parameters
$x_1,x_2,\ldots,x_\ell$. However, they have the same linear span in $\RR^n$.
Hence, after we drop the distinguishing inequalities $x_i > x_j$,
the maps are the same. In particular,
their images $B_\mu$ have the same Zariski closures 
$H_\tau$ in the simplex
$\Delta_{m-1}$.
\end{proof}

Example \ref{ex:waves} illustrates Theorem \ref{thm:waves} for $m{=}3$,
where $|T_{n,2}| = \lfloor n^2/4 \rfloor $ and
$|{\rm Stab}(\tau)| \in \{1, 2\}$.
We found it more challenging to understand the geometry of our image in higher dimensions.

\begin{example}[$n=8,m=4$] \label{ex:morewaves}
The image of $\psi_{\mathcal{A}}$ in the tetrahedron is a curvy $3$-polytope.
It is partitioned by $56 = \binom{8}{3}$ curvy triangles $B_\nu$.
Their types $\tau$ identify $16$ clusters: two singletons, ten triples, and
four of size six. These determine $16 = |T_{8,3}|$ branch surfaces~$H_\tau$.
\end{example}

Based on computational experiments,
we believe that,
for all pairs $m \leq n$ and all
exponents $\mathcal{A}$, the image of $\psi_{\mathcal{A}}$ has the 
combinatorial structure of the {\em cyclic polytope} of dimension
$m-1$ with $n$ vertices. In particular, the boundary is formed
by the curvy $(m-2)$-simplices $B_\mu$ where $\mu$
runs over all sequences that satisfy {\em Gale's Evenness Condition}
\cite[Theorem~0.7]{Zie}. This predicts that  the boundary in
Example \ref{ex:morewaves} is subdivided into $12$ curvy triangles
 $B_\mu$, namely those indexed by $\mu \in \{123, 128, 134, 145, 156, 167, 178, 238, 348,458,568,678\}$. Our belief is supported by related results for
 the moment curve, where
 $\mathcal{A} = \{1,2,\ldots,m\}$, due to
Bik, Czapli\'nski and Wageringel \cite{BCW21}.
Their figures show curvy cyclic polytopes in dimension~$3$.

The theory of triangulations of cyclic polytopes \cite{Ram} now suggests an approach to
 unique recovery even when $m<n$.
Each triangulation consists of a certain subset of $\binom{[n]}{m}$.
If our belief is correct, then  this should induce a curvy triangulation
of ${\rm im}(\psi_{\mathcal{A}})$. 
A general point~$c$ in the image is contained in a unique simplex
$B_\nu$ of the triangulation.
There is a unique $z$ in the locus $R_\nu$ with $\psi_\mathcal{A}(z) = c$.
The assignment $c \mapsto z$ serves as a method for unique recovery.

\smallskip

We conclude with a natural generalization of the problem
discussed in this section. Let $\mathcal{K} = \{K_1,\ldots,K_m\}$ be 
a set of centrally
symmetric convex bodies in $\RR^n$.
Each of these defines a norm $|| \, \cdot \, ||_{K_i}$ on $\RR^n$.
The unit ball for that norm is the convex body $K_i$. Consider the map
\begin{equation}
\label{eq:Kmap}
 \psi_{\mathcal{K}} \,:\,\RR^n_{\geq 0} \,\rightarrow \,\Delta_{m-1} \,:\,\,
x \,\mapsto\, \frac{1}{\sum_{j=1}^m || x ||_{K_j}} \cdot 
\bigl(\, ||x||_{K_1},||x||_{K_2}, \ldots,||x||_{K_m}  \bigr). 
\end{equation}

\begin{problem}
Study the image and the fibers of the map $\psi_{\mathcal{K}}$.
Identify the branch loci of $\psi_{\mathcal{K}}$.
\end{problem}

\bigskip
\bigskip

\noindent
\footnotesize 
{\bf Authors' addresses:}

\smallskip

\noindent Hana Mel\'anov\'a,
University of Vienna,
\hfill {\tt hana.melanova@univie.ac.at}

\noindent Bernd Sturmfels,
MPI-MiS Leipzig and UC Berkeley 
\hfill {\tt bernd@mis.mpg.de}

\noindent Rosa Winter,  MPI-MiS Leipzig
\hfill {\tt rosa.winter@mis.mpg.de}

\end{document}